\newtheorem{theorem}{Theorem}[section]
\newtheorem{lemma}{Lemma}[section]
\newtheorem{definition}{Definition}[section]
\newtheorem{corollary}{Corollary}[section]
\newtheorem{example}{Example}[section]
\title{Spectra of T-vertex and T-edge neighbourhood corona of Two Graphs}
\author{Indranil Mukherjee \thanks{ Ramakrishna Mission Vidyamandira, Howrah, W.B., India. Email:indranil87m@gmail.com} \and Suvra Kanti Chakraborty \thanks{(Corresponding Author) Ramakrishna Mission Vidyamandira, Howrah, W.B., India. Email: suvrakanti@vidyamandira.ac.in} \and  Arpita Das  \thanks{Bethuadahari College, Nadia, W.B., India. Email: arpita.das1201@gmail.com}  }
\date{}
\begin{document}

\maketitle
\begin{abstract}
The \textit{$T$-graph} $T(G)$ of a graph $G$ is the graph whose vertices are the vertices and edges of $G$, with two vertices of $T(G)$ are adjacent if and only if the corresponding elements of $G$ are adjacent or incident. In this paper, we determine the adjacency and Laplacian spectra of $T$-vertex neighborhood corona and $T$-edge neighborhood corona of a connected regular graph with an arbitrary regular graph in terms of their eigenvalues. Moreover, applying these results we construct some non-regular $A$-cospectral and $L$-cospectral graphs.
\end{abstract}
\textbf{AMS classification}: 05C50.\\
\textbf{Keywords}: Spectrum, Cospectral graphs, $T$-vertex neighborhood corona, $T$-edge neighborhood corona.

\section{Introduction}
In recent years, construction of cospectral graphs for different matrices is one of the interesting research problem in the area of spectral graph theory. 
All graphs considered in this paper are simple and undirected. Let $G=(V(G),E(G))$ be a graph with vertex set $V(G)$ and edge set $E(G)$. The \textit{adjacency matrix} of $G$, denoted by $A(G)$, is an $n\times n$ symmetric matrix such that $A(u,v)=1$ if and only if vertex $u$ is adjacent to vertex $v$ and $0$ otherwise. If $D(G)$ is the diagonal matrix of vertex degrees of $G$, then the \textit{Laplacian matrix} $L(G)$ is defined as $L(G)=D(G)-A(G)$. For a given matrix $M$ of size $n$, we denote the characteristic polynomial $\det(xI_{n}-M)$ of $M$ by $f_{M}(x)$. The eigenvalues of $A(G)$ and $L(G)$ are denoted by $\lambda_{1}(G)\geq\lambda_{2}(G)\geq\cdots\geq\lambda_{n}(G)$ and $0=\mu_{1}(G)\leq\mu_{2}(G)\leq\cdots\leq\mu_{n}(G)$ respectively and the multiset of these eigenvalues is called as adjacency spectrum and Laplacian spectrum respectively. Two graphs are said to be $A$-cospectral and $L$-cospectral if they have the same $A$-spectrum and $L$-spectrum respectively. Many research works already have done on different kinds of graph operations. One of this is corona operation. For two graphs $G_{1}$ and $G_{2}$ on disjoint sets of $n$ and $m$ vertices, respectively, the corona \cite{Har} $G_{1}\circ G_{2}$ of $G_{1}$ and $G_{2}$ is defined as the graph obtained by taking one copy of $G_{1}$ and $n$ copies of $G_{2}$, and then joining the $i^th$ vertex of $G_{1}$ to every vertex in the $i^th$ copy of $G_{2}$.
The \textit{$T$-graph} $T(G)$ \cite{Cv} of a graph $G$ is the graph whose vertices are the vertices and edges of $G$, with two vertices of $T(G)$ are adjacent if and only if the corresponding elements of $G$ are adjacent or incident. The set of such new vertices corresponding to each edge of $G$ is denoted by $I(G)$ i.e $I(G)=V(T(G))\backslash V(G)$.
In this paper we find the adjacency and Laplacian spectrum of graphs obtained by some corona operations on $T$-graphs, which are defined below.
\begin{definition}
Let $G_{1}$ and $G_{2}$ be two vertex-disjoint graphs with number of vertices $n_{1}$ and $n_{2}$, and edges $m_{1}$ and $m_{2}$, respectively. Then
\begin{enumerate}[(i)]
\item The \textit{$T$-vertex neighbourhood corona} of $G_{1}$ and $G_{2}$, denoted by $G_{1}\boxdot_{T} G_{2}$, is the graph obtained from vertex disjoint union of $T(G_{1})$ and $|V(G_{1})|$ copies of $G_{2}$, and by joining the neighbors of the $i^{th}$ vertex of $V(G_{1})$ to every vertex in the $i^{th}$ copy of $G_{2}$. The graph $G_{1}\boxdot_{T} G_{2}$ has $n_{1}(1+n_{2})+m_{1}$ vertices.
\item The \textit{$T$-edge neighbourhood corona} of $G_{1}$ and $G_{2}$, denoted by $G_{1}\boxminus_{T} G_{2}$, is the graph obtained from vertex disjoint union of $T(G_{1})$ and $|I(G_{1})|$ copies of $G_{2}$, and by joining the neighbors of the $i^{th}$ vertex of $I(G_{1})$ to every vertex in the $i^{th}$ copy of $G_{2}$. The graph $G_{1}\boxminus_{T} G_{2}$ has $m_{1}(1+n_{2})+n_{1}$ vertices.
\end{enumerate}
\end{definition}
\begin{example}
Let us consider two graphs $G_{1}=P_{3}$ and $G_{2}=P_{2}$. The $T$-vertex neighbourhood corona and $T$-edge neighbourhood corona of $G_{1}$ and $G_{2}$ are given in Figure \ref{f1}.
\end{example}\label{f1}
\begin{figure}[ht]
  \subcaptionbox*{}[.4\linewidth]{
    \includegraphics[width=1.2\linewidth]{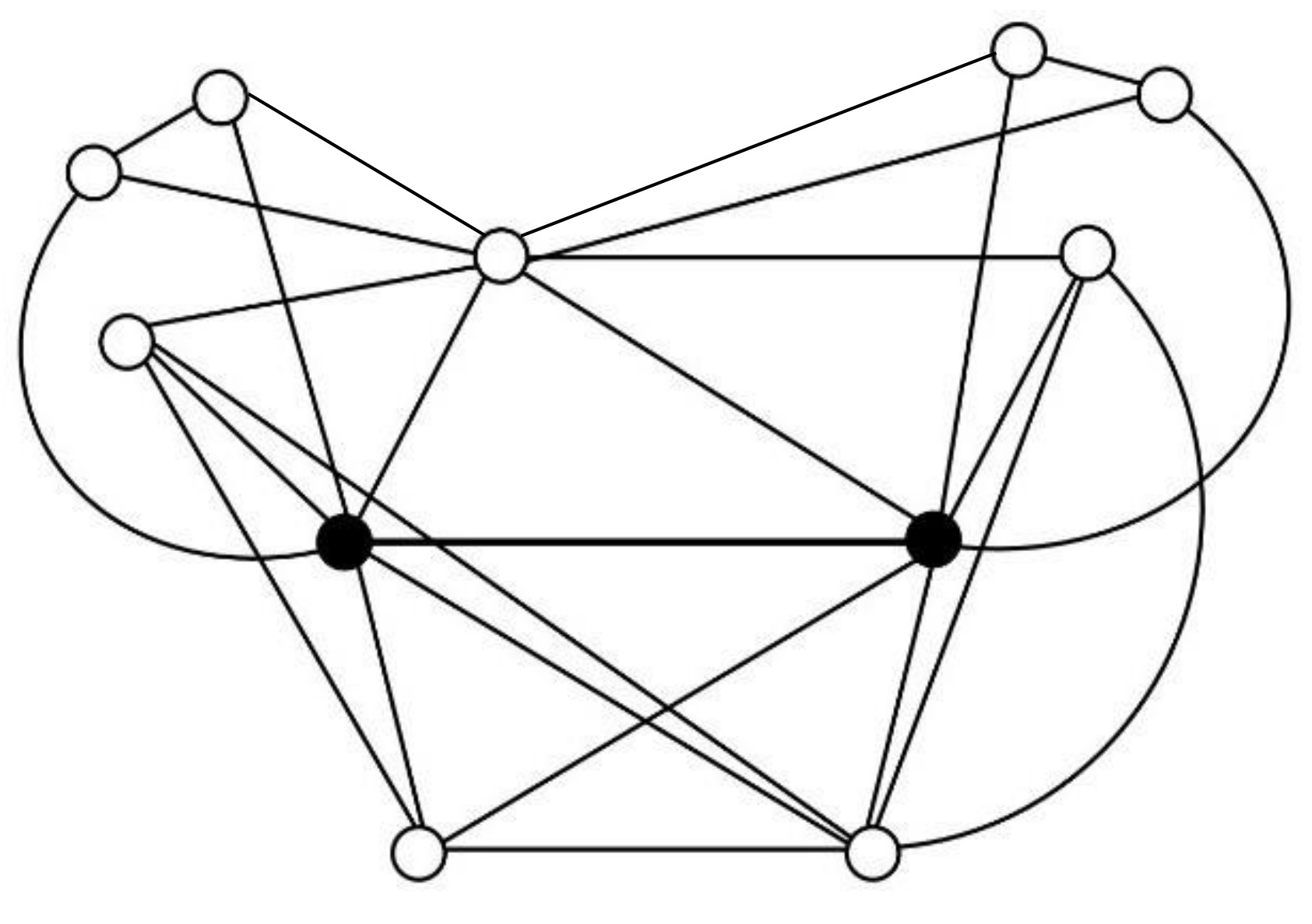}
  }
  \hfill
  \subcaptionbox*{}[.45\linewidth]{
    \includegraphics[width=1\linewidth]{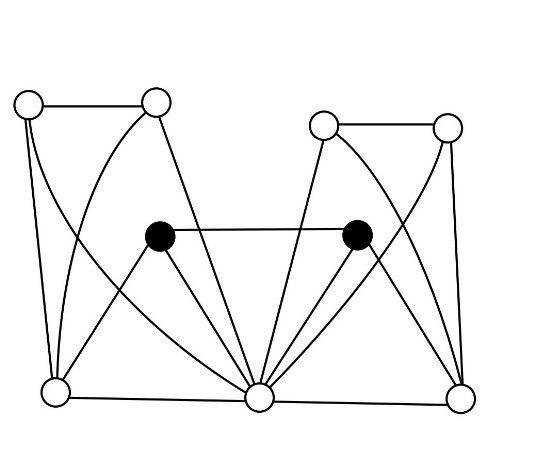}
  }
  \caption{$T$-vertex and $T$-edge neighbourhood corona of $G_1$ and $G_2$}
\end{figure}

Lu and Miao \cite{Lu} determined the adjacency, Laplacian and signless Laplacian spectra of subdivision vertex and edge corona for a regular graph and an arbitray graph in terms of their corresponding spectra. In \cite{Liu}, Liu and Lu found the adjacency, Laplacian and signless Laplacian spectra of subdivision vertex and edge neighbourhood corona of two graphs. Lan and Zhou \cite{Lan} determined the adjacency and Laplacian spectrum of different types of R-coronas for two graphs.
In \cite{Liu2}, Liu et al. determined the resistance distance and Kirchhoff index of $G_{1}\odot_{Q} G_{2}$ and $G_{1}\circleddash_{Q} G_{2}$ of a regular graph $G_{1}$ and an arbitrary graph $G_{2}$. Motivated by these works, here we determine the adjacency and Laplacian spectrum of $G_{1}\boxdot_{T} G_{2}$ and $G_{1}\boxminus_{T} G_{2}$ for a connected regular graph $G_{1}$ and an arbitrary regular graph $G_{2}$ in terms of the corresponding eigenvalues of $G_{1}$ and $G_{2}$. Moreover, applying these results we construct non-regular cospectral graphs.\\
To prove our results we need the following matrix products and few results on them. Recall that the \textit{Kronecker product} of matrices $A=(a_{ij})$ of size $m\times n$ and $B$ of size $p\times q$, denoted by $A\otimes B$, is defined to be the $mp\times nq$ partitioned matrix $(a_{ij}B)$. It is known \cite{Hor} that for matrices $M$, $N$, $P$ and $Q$ of suitable sizes, $MN\otimes PQ=(M\otimes P)(N\otimes Q)$. This implies that for nonsingular matrices $M$ and $N$, $(M\otimes N)^{-1}=M^{-1}\otimes N^{-1}$. It is also known \cite{Hor} that, for square matrices $M$ and $N$ of order $k$ and $s$ respectively, $\det(M\otimes N)=(detM)^{s}(detN)^{k}$.\\
We also need the result given in Lemma \ref{schur} below.
\begin{lemma}(Schur Complement \cite{Cve})\label{schur} Suppose that the order of all four matrices $M$, $N$, $P$ and $Q$ satisfy the rules of operations on matrices. Then we have,
\begin{eqnarray*}
\begin{vmatrix}
  M & N \\
  P & Q
  \end{vmatrix}
&=& |Q||M-NQ^{-1}P|, \mbox{if $Q$ is a non-singular square matrix},\\
&=& |M||Q-PM^{-1}N|, \mbox{if $M$ is a non-singular square matrix}.
\end{eqnarray*}
\end{lemma}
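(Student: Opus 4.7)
The plan is to reduce the block matrix to a block triangular form by elementary block row operations, since the determinant of a block triangular matrix factors as the product of the determinants of its diagonal blocks. For the first case, assuming $Q$ is nonsingular, I would pre-multiply by the block unipotent matrix
\begin{equation*}
U_1 = \begin{pmatrix} I & -NQ^{-1} \\ 0 & I \end{pmatrix},
\end{equation*}
which has determinant $1$. A direct computation gives
\begin{equation*}
U_1 \begin{pmatrix} M & N \\ P & Q \end{pmatrix} = \begin{pmatrix} M - NQ^{-1}P & 0 \\ P & Q \end{pmatrix},
\end{equation*}
a block lower-triangular matrix whose determinant is $|Q|\,|M-NQ^{-1}P|$. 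Taking determinants on both sides of the factorization then yields the first identity.

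For the second case, assuming $M$ is nonsingular, I would mirror the argument by instead eliminating the lower-left block: pre-multiply by
\begin{equation*}
U_2 = \begin{pmatrix} I & 0 \\ -PM^{-1} & I \end{pmatrix},
\end{equation*}
which also has determinant $1$, to obtain the block upper-triangular matrix
\begin{equation*}
U_2 \begin{pmatrix} M & N \\ P & Q \end{pmatrix} = \begin{pmatrix} M & N \\ 0 & Q - PM^{-1}N \end{pmatrix}.
\end{equation*}
The second identity then follows immediately by taking determinants.

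I do not expect any serious obstacle: the argument is essentially block Gaussian elimination. The only nontrivial auxiliary ingredient is the block-triangular determinant formula, which can be justified either via a Laplace expansion along the zero block, or by writing the triangular factor as a product of elementary row operations, none of which change the determinant. The invertibility hypothesis on $Q$ (respectively $M$) and the size-compatibility of the four blocks are genuinely needed so that $NQ^{-1}P$ (respectively $PM^{-1}N$) is well-defined and of the right shape to be subtracted from $M$ (respectively $Q$).
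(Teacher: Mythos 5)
Your proof is correct and complete: both block eliminations are computed accurately, the unipotent factors have determinant $1$, and the block-triangular determinant formula you invoke is the only auxiliary fact needed, which you justify adequately. The paper itself does not prove this lemma --- it simply cites it from the literature --- so there is no proof to compare against; your block Gaussian elimination argument is the standard one and fills that gap correctly.
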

For a graph $G$ with $n$ vertices and $m$ edges, the \emph{vertex-edge incidence matrix} $R(G)$ \cite{God} is a matrix of order $n\times m$, with entry $r_{ij}=1$ if the $i^{th}$ vertex is incident to the $j^{th}$ edge, and $0$ otherwise. It is well known \cite{Cve} that $R(G)R(G)^{T}=A(G)+rI_{n}$ and $A(G)=rI_{n}-L(G)$. So we get that $R(G)R(G)^{T}=2rI_{n}-L(G)$.\\
The \emph{line graph} \cite{God} of a graph $G$ is the graph $\mathcal{L}(G)$, whose vertices are the edges of $G$ and two vertices of $\mathcal{L}(G)$ are adjacent if and only if they are incident on a common vertex in $G$. It is well known \cite{Cve} that $R(G)^{T}R(G)=A(\mathcal{L}(G))+2I_{m}$.
\begin{lemma}\label{line1}
\cite{Cve} Let $G$ be an $r$-regular graph. Then the eigenvalues of $A(\mathcal{L}(G))$ are the eigenvalues of $A(G)+(r-2)I_{n}$ and $-2$ repeated $m-n$ times.
\end{lemma}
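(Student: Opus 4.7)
The plan is to connect the spectrum of $A(\mathcal{L}(G))$ to that of $A(G)$ through the vertex-edge incidence matrix $R:=R(G)$, exploiting the two identities already recorded in the excerpt: $RR^{T}=A(G)+rI_{n}$ and $R^{T}R=A(\mathcal{L}(G))+2I_{m}$. The guiding principle is the standard fact that $RR^{T}$ and $R^{T}R$ share the same nonzero eigenvalues with the same multiplicities; the only discrepancy is $|m-n|$ extra zero eigenvalues on the larger side.

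I would make this precise by deriving the polynomial identity
\[
\det(xI_{m}-R^{T}R) \;=\; x^{\,m-n}\det(xI_{n}-RR^{T}).
\]
For $x\neq 0$ this falls out of Lemma~\ref{schur} applied to the block matrix
\[
\begin{pmatrix} I_{n} & R \\ R^{T} & xI_{m} \end{pmatrix}
\]
in both directions: expanding along $Q=xI_{m}$ yields $x^{m}\det(I_{n}-x^{-1}RR^{T})=x^{\,m-n}\det(xI_{n}-RR^{T})$, whereas expanding along $M=I_{n}$ yields $\det(xI_{m}-R^{T}R)$. Since both sides of the displayed identity are polynomials in $x$, equality on $x\neq 0$ forces equality everywhere.

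The second step is substitution. Plugging $R^{T}R=A(\mathcal{L}(G))+2I_{m}$ and $RR^{T}=A(G)+rI_{n}$ into the identity converts it into
\[
f_{A(\mathcal{L}(G))}(x-2) \;=\; x^{\,m-n}\,f_{A(G)}(x-r),
\]
and the translation $x\mapsto x+2$ rewrites this as
\[
f_{A(\mathcal{L}(G))}(x) \;=\; (x+2)^{m-n}\,f_{A(G)+(r-2)I_{n}}(x),
\]
from which the claimed description of the spectrum follows at a glance.

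The argument is essentially a one-shot application of the Schur complement, so I do not anticipate any serious technical obstacle. The only point requiring mild care is the implicit assumption that $m-n\geq 0$; for an $r$-regular graph with $r\geq 2$ this is automatic since $m=rn/2\geq n$, and the remaining case $r=1$ (a perfect matching) is trivial. Once the block-determinant identity is in place, the rest is pure bookkeeping.
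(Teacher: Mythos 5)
Your argument is correct, and it is the standard proof of this classical fact. Note that the paper itself does not prove Lemma \ref{line1} at all — it is quoted from the reference [Cve] — so there is no in-paper proof to compare against; your derivation via the identity $\det(xI_{m}-R^{T}R)=x^{m-n}\det(xI_{n}-RR^{T})$, obtained from the two Schur-complement expansions of $\begin{pmatrix} I_{n} & R \\ R^{T} & xI_{m}\end{pmatrix}$ together with $RR^{T}=A(G)+rI_{n}$ and $R^{T}R=A(\mathcal{L}(G))+2I_{m}$, is exactly the argument one finds in the cited source and uses only tools (Lemma \ref{schur} and the incidence-matrix identities) already set up in the paper. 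Your remark on the sign of $m-n$ is the right caveat and is handled adequately.
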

If $G$ is an $r$-regular graph, then obviously $L(G)=rI_{n}-A(G)$. Therefore, by Lemma \ref{line1}, we have the following.
\begin{lemma}\label{line2}
For an $r$-regular graph $G$, the eigenvalues of $A(\mathcal{L}(G))$ are the eigenvalues of $2(r-1)I_{n}-L(G)$ and $-2$ repeated $m-n$ times.
\end{lemma}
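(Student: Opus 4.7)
The plan is to derive this statement as an immediate consequence of Lemma \ref{line1}, using the defining relation between $A(G)$ and $L(G)$ that holds for regular graphs. Since no new combinatorial input is required, the proof should be a short algebraic manipulation rather than a fresh argument about the line graph.

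First, I would invoke Lemma \ref{line1}, which tells us that for an $r$-regular graph $G$ on $n$ vertices with $m$ edges, the spectrum of $A(\mathcal{L}(G))$ consists of the $n$ eigenvalues of $A(G) + (r-2)I_{n}$ together with the eigenvalue $-2$ with multiplicity $m-n$. Next, I would recall the identity $L(G) = rI_{n} - A(G)$ that holds for any $r$-regular graph, and rewrite it as $A(G) = rI_{n} - L(G)$. Substituting this into $A(G) + (r-2)I_{n}$ gives
\begin{equation*}
A(G) + (r-2)I_{n} \;=\; \bigl(rI_{n} - L(G)\bigr) + (r-2)I_{n} \;=\; 2(r-1)I_{n} - L(G).
\end{equation*}
Since the matrices $A(G) + (r-2)I_{n}$ and $2(r-1)I_{n} - L(G)$ are literally equal, they have identical spectra, and the conclusion of Lemma \ref{line1} translates verbatim into the claimed statement about $2(r-1)I_{n} - L(G)$ and the $-2$ eigenvalues of multiplicity $m-n$.

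There is no real obstacle here; the only thing to be careful about is ensuring that the hypothesis of $r$-regularity is what allows the passage from $A(G)$ to $L(G)$ via the identity $L(G)=rI_{n}-A(G)$. In particular, the multiplicity count for the eigenvalue $-2$ is not affected by the substitution, since that part of Lemma \ref{line1} is independent of the expression $A(G)+(r-2)I_{n}$. Thus the proof reduces to one line of algebra plus a citation of the preceding lemma.
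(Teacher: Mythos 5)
Your proposal is correct and matches the paper's own derivation exactly: the paper also obtains Lemma \ref{line2} by noting that $L(G)=rI_{n}-A(G)$ for an $r$-regular graph and then substituting into the conclusion of Lemma \ref{line1}, so that $A(G)+(r-2)I_{n}=2(r-1)I_{n}-L(G)$. Nothing is missing; the multiplicity $m-n$ of the eigenvalue $-2$ carries over unchanged, just as you observe.
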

\section{Our Results}\label{sec:2}
Throughout the paper for any integer $k$, $I_{k}$ denotes the identity matrix of size $k$, $\mathbf{1}_{k}$ denotes the column vector of size $k$ whose all entries are $1$ and $O_{k}$ denotes the zero matrix of size $k$. 
\begin{definition}
\rm{\cite{Cu,Mc} The \textit{$M$-coronal} $\Gamma_{M}(x)$ of an $n\times n$ matrix $M$ is defined as the sum of the entries
of the matrix $(xI_{n}-M)^{-1}$(if exists), that is,\\
$~~~~~~~~~~~~~~~~~~~~~~~~~~~~~~~~~~~~~~\Gamma_{M}(x)=\mathbf{1}^{T}_{n}(xI_{n}-M)^{-1}\mathbf{1}_{n}$}
\end{definition}
\noindent
The following Lemma is straightforward.
\begin{lemma}\label{le2}
\cite{Cu} If $M$ is an $n\times n$ matrix with each row sum equal to a constant
$t$, then
$\Gamma_{M}(x)=\frac{n}{x-t}$.
\end{lemma}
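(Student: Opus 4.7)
The approach is to exploit the fact that the constant row-sum hypothesis makes $\mathbf{1}_n$ an eigenvector of $M$. Specifically, the condition ``every row sum equals $t$'' is equivalent to $M\mathbf{1}_n = t\mathbf{1}_n$, so $\mathbf{1}_n$ is an eigenvector of $M$ with eigenvalue $t$. From here I would simply translate this into a statement about $xI_n - M$ and then invert.

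Concretely, the plan is first to compute $(xI_n - M)\mathbf{1}_n = x\mathbf{1}_n - M\mathbf{1}_n = (x-t)\mathbf{1}_n$. Assuming $x\neq t$ (which we need in any event for $(xI_n-M)^{-1}$ to make sense when acting nontrivially on the $\mathbf{1}_n$ direction), this rearranges to $(xI_n - M)^{-1}\mathbf{1}_n = \frac{1}{x-t}\mathbf{1}_n$. Substituting into the definition $\Gamma_M(x) = \mathbf{1}_n^T(xI_n-M)^{-1}\mathbf{1}_n$ yields $\Gamma_M(x) = \frac{1}{x-t}\mathbf{1}_n^T\mathbf{1}_n = \frac{n}{x-t}$, as required.

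There is essentially no obstacle here; the only subtlety worth flagging is that the statement implicitly requires $x$ to be such that the inverse exists (i.e.\ $x$ is not an eigenvalue of $M$), and in particular $x\neq t$, so the final expression is well defined. Since the paper remarks that the lemma is ``straightforward,'' the above one-line eigenvector argument is all that is needed.
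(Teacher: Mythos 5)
Your proof is correct and is exactly the standard one-line eigenvector argument the paper has in mind when it calls the lemma ``straightforward'' (the paper itself gives no proof, only a citation). The observation that $M\mathbf{1}_n = t\mathbf{1}_n$ implies $(xI_n-M)^{-1}\mathbf{1}_n = \frac{1}{x-t}\mathbf{1}_n$, together with $\mathbf{1}_n^T\mathbf{1}_n = n$, is all that is needed, and your remark about requiring $x \neq t$ for invertibility is a sensible clarification.
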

Let $G_{i}$ be a graph with $n_{i}$ vertices and $m_{i}$ edges. Let $V(G_{1})=\{v_{1},v_{2},\ldots,v_{n_{1}}\}$, $I(G_{1})=\{e_{1},e_{2},\ldots,e_{m_{1}}\}$, $V(G_{2})=\{u_{1},u_{2},\ldots,u_{n_{2}}\}$. 
For $i=1,2,\ldots,n_{1}$, let $V^{i}(G_{2})=\{u^{i}_{1},u^{i}_{2},\ldots,u^{i}_{n_{2}}\}$ be the vertex set of the $i^{th}$ copy of $G_{2}$. Then $V(G_{1})\bigcup I(G_{1})\bigcup \{V^{1}(G_{2})\bigcup V^{2}(G_{2})\bigcup \cdots \bigcup V^{l}(G_{2})\}$ is a partition of both $V(G_{1}\boxdot_{T} G_{2})$ and $V(G_{1}\boxminus_{T} G_{2})$, where $l=n_{1}$ for the former and $l=m_{1}$ for the latter.

\subsection{Spectra of $T$-vertex neighbourhood corona}
In this section we determine adjacency spectrum and Laplacian spectrum of $T$-vertex neighbourhood corona of two graphs.
\subsubsection{$A$-spectra of $T$-vertex neighbourhood corona}
Let $G_{1}$ be a $r_{1}$-regular graph on $n_{1}$ vertices and $m_{1}$ edges and $G_{2}$ be any arbitrary graph with $n_{2}$ vertices.
Then the adjacency matrix of $A(G_{1}\boxdot_{T} G_{2})$ can be written as:\\
$$
A(G_{1}\boxdot_{T} G_{2})=
\begin{pmatrix}
A(G_{1}) & R(G_{1}) & A(G_{1})\otimes \mathbf{1}_{n_{2}}^{T}\\\\
R(G_{1})^{T} & A(\mathcal{L}({G_{1}})) & R(G_{1})^{T}\otimes \mathbf{1}_{n_{2}}^{T}\\\\
A(G_{1})\otimes \mathbf{1}_{n_{2}} & R(G_{1})\otimes \mathbf{1}_{n_{2}} & I_{n_{1}}\otimes A(G_{2})
\end{pmatrix}.
$$
\begin{theorem}\label{t1}
Let $G_{1}$ be a $r_{1}$-regular graph on $n_{1}$ vertices and $m_{1}$ edges and $G_{2}$ be any arbitrary graph with $n_{2}$ vertices. Then the adjacency characteristic polynomial of $G_{1} \boxdot_{T} G_{2}$ be:
\begin{eqnarray*}
f_{A(G_{1} \boxdot_{T} G_{2})}(x)&=&
(x+2)^{m_{1}-n_{1}}\prod \limits_{i=1}^{n_{1}}\{(x+2)-(1+\Gamma_{A(G_{2})}(x))(\lambda_{i}(G_{1})+r_{1})\}\\&&
\det(xI_{n_{1}}-A(G_{1})-\Gamma_{A(G_{2}}(x)A(G_{1})^{2}\\&&-(1+\Gamma_{A(G_{2})}(x)A(G_{1}))R((x+2)I_{m_{1}}-(1+\Gamma_{A(G_{2})}(x))R^{T}R)^{-1}
R^{T}(1+\Gamma_{A(G_{2})}(x)A(G_{1}))).
\end{eqnarray*}
\end{theorem}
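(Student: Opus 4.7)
The plan is to compute $\det(xI-A(G_{1}\boxdot_{T}G_{2}))$ by applying the Schur complement formula of Lemma \ref{schur} twice, reducing the $3\times 3$ block matrix first to a $2\times 2$ block matrix and then to a single $n_{1}\times n_{1}$ block whose determinant is the final factor in the claimed expression. The key observation is that the lower-right block is $I_{n_{1}}\otimes(xI_{n_{2}}-A(G_{2}))$, whose inverse is $I_{n_{1}}\otimes(xI_{n_{2}}-A(G_{2}))^{-1}$ by the Kronecker product identities quoted from \cite{Hor}; moreover every off-diagonal interaction with this block involves the all-ones vector $\mathbf{1}_{n_{2}}$, so sandwiching $(xI_{n_{2}}-A(G_{2}))^{-1}$ between $\mathbf{1}_{n_{2}}^{T}$ and $\mathbf{1}_{n_{2}}$ produces precisely the coronal $\Gamma_{A(G_{2})}(x)$.

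First I would apply Schur complement with respect to $Q=I_{n_{1}}\otimes(xI_{n_{2}}-A(G_{2}))$. Using $(M\otimes P)(N\otimes S)=MN\otimes PS$, each of the four correction blocks collapses to a scalar multiple of a familiar matrix: for instance $(A(G_{1})\otimes\mathbf{1}_{n_{2}}^{T})\,Q^{-1}\,(A(G_{1})\otimes\mathbf{1}_{n_{2}})=\Gamma_{A(G_{2})}(x)\,A(G_{1})^{2}$, and similarly the cross terms give $\Gamma_{A(G_{2})}(x)\,A(G_{1})R(G_{1})$ (and its transpose), and the lower-right correction gives $\Gamma_{A(G_{2})}(x)\,R(G_{1})^{T}R(G_{1})$. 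The resulting $2\times 2$ block Schur complement has upper-left entry $xI_{n_{1}}-A(G_{1})-\Gamma_{A(G_{2})}(x)A(G_{1})^{2}$, off-diagonal entries $-(I_{n_{1}}+\Gamma_{A(G_{2})}(x)A(G_{1}))R(G_{1})$ and its transpose, and lower-right entry $xI_{m_{1}}-A(\mathcal{L}(G_{1}))-\Gamma_{A(G_{2})}(x)R(G_{1})^{T}R(G_{1})$.

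Next, using the identity $R(G_{1})^{T}R(G_{1})=A(\mathcal{L}(G_{1}))+2I_{m_{1}}$ recalled from the introduction, the $(2,2)$ block rewrites as $(x+2)I_{m_{1}}-(1+\Gamma_{A(G_{2})}(x))R(G_{1})^{T}R(G_{1})$. I would then apply Schur complement a second time with respect to this block. To evaluate its determinant, recall that for the $r_{1}$-regular graph $G_{1}$ we have $R(G_{1})R(G_{1})^{T}=A(G_{1})+r_{1}I_{n_{1}}$, so the nonzero eigenvalues of $R(G_{1})^{T}R(G_{1})$ are $\lambda_{i}(G_{1})+r_{1}$ for $i=1,\dots,n_{1}$ and it has the eigenvalue $0$ with multiplicity $m_{1}-n_{1}$, matching the structure described by Lemma \ref{line1}. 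This yields exactly the factor $(x+2)^{m_{1}-n_{1}}\prod_{i=1}^{n_{1}}\{(x+2)-(1+\Gamma_{A(G_{2})}(x))(\lambda_{i}(G_{1})+r_{1})\}$, while the remaining $n_{1}\times n_{1}$ Schur block produces the final determinant in the statement.

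The main obstacle is bookkeeping rather than conceptual: one must carefully compute all four Kronecker-product corrections in the first Schur step, factor the off-diagonal entries symmetrically as $(I_{n_{1}}+\Gamma_{A(G_{2})}(x)A(G_{1}))R(G_{1})$ so that the second Schur complement produces the compact form $(I_{n_{1}}+\Gamma_{A(G_{2})}(x)A(G_{1}))R(G_{1})\,\bigl((x+2)I_{m_{1}}-(1+\Gamma_{A(G_{2})}(x))R(G_{1})^{T}R(G_{1})\bigr)^{-1}R(G_{1})^{T}(I_{n_{1}}+\Gamma_{A(G_{2})}(x)A(G_{1}))$ displayed in the theorem, and verify that the invertibility assumptions of Lemma \ref{schur} hold generically so the identity extends by continuity to all $x$. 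Once these two Schur reductions are performed and the spectrum of $R(G_{1})^{T}R(G_{1})$ is invoked, the claimed factorisation is immediate.
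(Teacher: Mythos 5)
Your proposal is correct and follows essentially the same route as the paper's own proof: a first Schur complement on the block $I_{n_{1}}\otimes(xI_{n_{2}}-A(G_{2}))$, with the Kronecker identities collapsing each correction term to a multiple of $\Gamma_{A(G_{2})}(x)$, then the rewriting $R^{T}R=A(\mathcal{L}(G_{1}))+2I_{m_{1}}$, a second Schur complement, and the spectrum of $R^{T}R$ (equivalently Lemma \ref{line1}) to extract the factor $(x+2)^{m_{1}-n_{1}}\prod_{i=1}^{n_{1}}\{(x+2)-(1+\Gamma_{A(G_{2})}(x))(\lambda_{i}(G_{1})+r_{1})\}$. The only point to flag is that your first Schur step also produces the factor $\det\bigl(I_{n_{1}}\otimes(xI_{n_{2}}-A(G_{2}))\bigr)=\prod_{j=1}^{n_{2}}\{x-\lambda_{j}(G_{2})\}^{n_{1}}$, which is present in the paper's own derivation but absent from the theorem statement as printed, so the ``claimed factorisation'' is immediate only once that factor is restored.
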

\begin{proof}
\small
The adjacency characteristic polynomial of $G_{1} \boxdot_{T} G_{2}$ is
\begin{eqnarray*}
f_{A(G_{1} \boxdot_{T} G_{2})}(x)
&=&\det(xI_{n_{1}(1+n_{2})+m_{1}}-A(G_{1}\boxdot_{T} G_{2}))\\
&=&\det\begin{pmatrix}
xI_{n_{1}}-A(G_{1}) & -R(G_{1}) & -A(G_{1})\otimes \mathbf{1}_{n_{2}}^{T}\\\\
-R(G_{1})^{T} & xI_{m_{1}}-A(\mathcal{L}({G_{1}})) & -R(G_{1})^{T}\otimes \mathbf{1}_{n_{2}}^{T}\\\\
-A(G_{1})\otimes \mathbf{1}_{n_{2}} & -R(G_{1})\otimes \mathbf{1}_{n_{2}} & I_{n_{1}}\otimes (xI_{n_{2}}-A(G_{2}))
\end{pmatrix}\\
&=&\det(I_{n_{1}}\otimes (xI_{n_{2}}-A(G_{2}))\det(S)=\prod \limits_{j=1}^{n_{2}}\{x-\lambda_{j}(G_{2})\}^{n_{1}}\det(S), where
\end{eqnarray*}
\begin{eqnarray*}
S&=&\begin{pmatrix}
xI_{n_{1}}-A(G_{1}) & -R(G_{1}) \\
-R(G_{1})^{T} & xI_{m_{1}}-A(\mathcal{L}({G_{1}}))
\end{pmatrix}
\\&&-
\begin{pmatrix}
-A(G_{1})\otimes \mathbf{1}_{n_{2}}^{T} \\
-R(G_{1})^{T}\otimes \mathbf{1}_{n_{2}}^{T}
\end{pmatrix}
(I_{n_{1}}\otimes (xI_{n_{2}}-A(G_{2})))^{-1}
\begin{pmatrix}
-A(G_{1})\otimes \mathbf{1}_{n_{2}} & -R(G_{1})\otimes \mathbf{1}_{n_{2}}
\end{pmatrix}\\\\
&=&\begin{pmatrix}
xI_{n_{1}}-A(G_{1})-\Gamma_{A(G_{2}}(x)A(G_{1})^{2}& -R-\Gamma_{A(G_{2})}(x)A(G_{1})R \\
-R^{T}-\Gamma_{A(G_{2})}(x)R^{T}A(G_{1})& xI_{m_{1}}-A(\mathcal{L}(G_{1}))-\Gamma_{A(G_{2})}(x)R^{T}R
\end{pmatrix}\\\\
&=&\begin{pmatrix}
xI_{n_{1}}-A(G_{1})-\Gamma_{A(G_{2}}(x)A(G_{1})^{2}& -R-\Gamma_{A(G_{2})}(x)A(G_{1})R \\
-R^{T}-\Gamma_{A(G_{2})}(x)R^{T}A(G_{1})& (x+2)I_{m_{1}}-(1+\Gamma_{A(G_{2})}(x))R^{T}R
\end{pmatrix}.
\end{eqnarray*}
\begin{eqnarray*}
\begin{array}{lcl}
\det(S)=\det((x+2)I_{m_{1}}-(1+\Gamma_{A(G_{2})}(x))R^{T}R) \det((xI_{n_{1}}-A(G_{1})-\Gamma_{A(G_{2})}(x)A(G_{1})^{2})\\\\~~~~~~~~~~~~-( R+\Gamma_{A(G_{2})}(x)A(G_{1})R)((x+2)I_{m_{1}}-(1+\Gamma_{A(G_{2})}(x))R^{T}R)^{-1}(R^{T}+\Gamma_{A(G_{2})}(x)R^{T}A(G_{1})))\\\\
~~~~~~~~=\det((x+2)I_{m_{1}}-(1+\Gamma_{A(G_{2})}(x))(A(\mathcal{L}(G_{1}))+2I_{m_{1}})) \det((xI_{n_{1}}-A(G_{1})-\Gamma_{A(G_{2})}(x)A(G_{1})^{2})\\\\~~~~~~~~~~~~-( R+\Gamma_{A(G_{2})}(x)A(G_{1})R)((x+2)I_{m_{1}}-(1+\Gamma_{A(G_{2})}(x))R^{T}R)^{-1}(R^{T}+\Gamma_{A(G_{2})}(x)R^{T}A(G_{1})))\\\\
~~~~~~~~=(x+2)^{m_{1}-n_{1}}\prod \limits_{i=1}^{n_{1}}\{(x+2)-(1+\Gamma_{A(G_{2})}(x))(\lambda_{i}(G_{1})+r_{1})\}\det((xI_{n_{1}}-A(G_{1})-\Gamma_{A(G_{2})}(x)A(G_{1})^{2})\\\\~~~~~~~~~~~~-( R+\Gamma_{A(G_{2})}(x)A(G_{1})R)((x+2)I_{m_{1}}-(1+\Gamma_{A(G_{2})}(x))R^{T}R)^{-1}(R^{T}+\Gamma_{A(G_{2})}(x)R^{T}A(G_{1})))
\end{array}
\end{eqnarray*}
Therefore
\begin{eqnarray*}
\begin{array}{lcl}
f_{A(G_{1} \boxdot_{T} G_{2})}(x)=\prod \limits_{j=1}^{n_{2}}\{x-\lambda_{j}(G_{2})\}^{n_{1}}(x+2)^{m_{1}-n_{1}}\prod \limits_{i=1}^{n_{1}}\{(x+2)-(1+\Gamma_{A(G_{2})}(x))(\lambda_{i}(G_{1})+r_{1})\}\\\\~~~~~~~~~~~~~~~~~~~~~~\det((xI_{n_{1}}-A(G_{1})-\Gamma_{A(G_{2})}(x)A(G_{1})^{2})-\\\\~~~~~~~~~~~~~~~~~~~~~~( 1+\Gamma_{A(G_{2})}(x)A(G_{1}))R((x+2)I_{m_{1}}-(1+\Gamma_{A(G_{2})}(x))R^{T}R)^{-1}R^{T}(1+\Gamma_{A(G_{2})}(x)A(G_{1}))).
\end{array}
\end{eqnarray*}
\end{proof}
\subsubsection{$L$-spectra of $T$-vertex neighbourhood corona}
Let $G_{1}$ be a $r_{1}$-regular graph on $n_{1}$ vertices and $m_{1}$ edges and $G_{2}$ be any arbitrary graph with $n_{2}$ vertices.
Then the adjacency matrix of $L(G_{1}\boxdot_{T} G_{2})$ can be written as:\\
$$
L(G_{1}\boxdot_{T} G_{2})=
\begin{pmatrix}
L(G_{1})+r_{1}(1+n_{2})I_{n_{1}} & -R(G_{1}) & -A(G_{1})\otimes \mathbf{1}_{n_{2}}^{T}\\\\
-R(G_{1})^{T} & (2n_{2}+2r_{1})I_{m_{1}}-A(\mathcal{L}({G_{1}})) & -R(G_{1})^{T}\otimes \mathbf{1}_{n_{2}}^{T}\\\\
-A(G_{1})\otimes \mathbf{1}_{n_{2}} & -R(G_{1})\otimes \mathbf{1}_{n_{2}} & I_{n_{1}}\otimes (L(G_{2})+2r_{1}I_{n_{2}})
\end{pmatrix}.
$$
\begin{theorem}\label{t2}
Let $G_{1}$ be a $r_{1}$-regular graph on $n_{1}$ vertices and $m_{1}$ edges and $G_{2}$ be any arbitrary graph with $n_{2}$ vertices. Then the Laplacian characteristic polynomial of $G_{1} \boxdot_{T} G_{2}$ be:
\begin{eqnarray*}
f_{L(G_{1} \boxdot_{T} G_{2})}(x)&=&
(x-2-2n_{2}-2r_{1})^{m_{1}-n_{1}}\prod \limits_{j=2}^{n_{2}}\{(x-2r_{1}-\mu_{i}(G_{2}))^{n_{1}}\}\\&&\prod \limits_{i=1}^{n_{1}}\{(x^{2}-(2+2n_{2}+2r_{1}+\mu_{i}(G_{1}))x+2r_{1}(2+2n_{2}+2r_{1})+(2r_{1}+n_{2})(\mu_{i}(G_{1})-2r_{1})\}\\&&
\det(((x-r_{1}(1+n_{2}))I_{n_{1}}-L(G_{1})-\Gamma_{L(G_{2})}(x-2r_{1})A(G_{1})^{2})-(R-\Gamma_{L(G_{2})}(x-2r_{1})A(G_{1})R)((x-2-2n_{2}-2r_{1})I_{m_{1}}+(1-\Gamma_{L(G_{2})}(x-2r_{1}))R^{T}R)^{-1}
(R^{T}+\Gamma_{L(G_{2})}(x-2r_{1})R^{T}A(G_{1}))).
\end{eqnarray*}
\end{theorem}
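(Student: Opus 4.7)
The proof follows the same template as Theorem \ref{t1}, applied to the Laplacian block form above via two successive invocations of Lemma \ref{schur} (Schur complement). The plan is first to pivot on the bottom-right block $I_{n_1}\otimes((x-2r_1)I_{n_2}-L(G_2))$ of $xI-L(G_1\boxdot_{T} G_2)$, which is generically invertible with determinant $\prod_{j=1}^{n_2}(x-2r_1-\mu_j(G_2))^{n_1}$, and then to pivot a second time on the bottom-right block of the resulting $2\times 2$ Schur complement.

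The key simplification at the first step is that every entry of the correction matrix is controlled by the scalar $\mathbf{1}_{n_2}^{T}[(x-2r_1)I_{n_2}-L(G_2)]^{-1}\mathbf{1}_{n_2}=\Gamma_{L(G_2)}(x-2r_1)$; since each row of $L(G_2)$ sums to zero, Lemma \ref{le2} gives the explicit value $\Gamma_{L(G_2)}(x-2r_1)=n_2/(x-2r_1)$. After this correction the $2\times 2$ Schur complement has $(1,1)$ block $(x-r_1(1+n_2))I_{n_1}-L(G_1)-\Gamma_{L(G_2)}(x-2r_1)A(G_1)^2$ and $(2,2)$ block $(x-2n_2-2r_1)I_{m_1}+A(\mathcal{L}(G_1))-\Gamma_{L(G_2)}(x-2r_1)R^{T}R$; using the identity $A(\mathcal{L}(G_1))=R^{T}R-2I_{m_1}$ the latter collapses to $(x-2-2n_2-2r_1)I_{m_1}+(1-\Gamma_{L(G_2)}(x-2r_1))R^{T}R$.

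At the second step I would pivot on this reduced $(2,2)$ block. Its determinant is computed from the spectrum of $R^{T}R$: by $RR^{T}=2r_1 I_{n_1}-L(G_1)$, the nonzero eigenvalues of $R^{T}R$ are $2r_1-\mu_i(G_1)$ for $i=1,\dots,n_1$, together with $0$ of multiplicity $m_1-n_1$. This yields the factor $(x-2-2n_2-2r_1)^{m_1-n_1}\prod_{i=1}^{n_1}\bigl[(x-2-2n_2-2r_1)+(1-\Gamma_{L(G_2)}(x-2r_1))(2r_1-\mu_i(G_1))\bigr]$, while the remaining Schur factor is exactly the large determinant displayed in the theorem.

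The final bookkeeping is to substitute $\Gamma_{L(G_2)}(x-2r_1)=n_2/(x-2r_1)$ into the product and multiply each bracket by $x-2r_1$; a direct expansion shows that this converts the $i$-th bracket into the quadratic $x^2-(2+2n_2+2r_1+\mu_i(G_1))x+2r_1(2+2n_2+2r_1)+(2r_1+n_2)(\mu_i(G_1)-2r_1)$ displayed in the theorem. The $n_1$ extra factors of $(x-2r_1)$ introduced this way cancel exactly with the $n_1$ copies of the root $\mu_1(G_2)=0$ inside the pre-factor $\prod_{j=1}^{n_2}(x-2r_1-\mu_j(G_2))^{n_1}$, reducing the latter to $\prod_{j=2}^{n_2}(x-2r_1-\mu_j(G_2))^{n_1}$ as stated. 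The main obstacle is the bookkeeping in this last step: one has to juggle $A(\mathcal{L}(G_1))=R^{T}R-2I_{m_1}$, $RR^{T}=2r_1 I_{n_1}-L(G_1)$ and $\Gamma_{L(G_2)}(x-2r_1)=n_2/(x-2r_1)$ inside a single determinant without losing a sign or misaligning the cancellation of $(x-2r_1)$-factors between the two products.
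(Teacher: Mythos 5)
Your proposal is correct and follows essentially the same route as the paper: a first Schur complement on the block $I_{n_1}\otimes((x-2r_1)I_{n_2}-L(G_2))$ with the correction expressed through $\Gamma_{L(G_2)}(x-2r_1)=n_2/(x-2r_1)$, the identity $A(\mathcal{L}(G_1))+2I_{m_1}=R^{T}R$ to collapse the $(2,2)$ block, a second Schur complement whose determinant is evaluated via the spectrum of $R^{T}R$ (the paper writes the eigenvalues as $\lambda_i(G_1)+r_1$, which equals your $2r_1-\mu_i(G_1)$), and the final cancellation of $(x-2r_1)^{n_1}$ against the $\mu_1(G_2)=0$ factor. The only cosmetic slip is calling the $2r_1-\mu_i(G_1)$ the ``nonzero'' eigenvalues of $R^{T}R$ (some may vanish, e.g.\ for bipartite $G_1$), but this does not affect the argument.
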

\begin{proof}
\small
The Laplacian characteristic polynomial of $G_{1} \boxdot_{T} G_{2}$ is\\\\
$f_{L(G_{1} \boxdot_{T} G_{2})}(x)$\\\\
=$\det(xI_{n_{1}(1+n_{2})+m_{1}}-L(G_{1}\boxdot_{T} G_{2}))$\\\\
=$\det\begin{pmatrix}
(x-r_{1}(1+n_{2})I_{n_{1}}-L(G_{1}) & R(G_{1}) & A(G_{1})\otimes \mathbf{1}_{n_{2}}^{T}\\\\
R(G_{1})^{T} & (x-2n_{2}-2r_{1})I_{m_{1}}+A(\mathcal{L}({G_{1}})) & R(G_{1})^{T}\otimes \mathbf{1}_{n_{2}}^{T}\\\\
A(G_{1})\otimes \mathbf{1}_{n_{2}} & R(G_{1})\otimes \mathbf{1}_{n_{2}} & I_{n_{1}}\otimes ((x-2r_{1})I_{n_{2}}-L(G_{2}))
\end{pmatrix}$\\\\
=$\det(I_{n_{1}}\otimes ((x-2r_{1})I_{n_{2}}-L(G_{2}))\det(S)$=$\prod \limits_{j=1}^{n_{2}}\{x-2r_{1}-\mu_{j}(G_{2})\}^{n_{1}}\det(S)$, where
\begin{eqnarray*}
S&=&\begin{pmatrix}
(x-r_{1}(1+n_{2}))I_{n_{1}}-L(G_{1}) & R(G_{1}) \\
R(G_{1})^{T} & (x-2n_{2}-2r_{1})I_{m_{1}}+A(\mathcal{L}({G_{1}}))
\end{pmatrix}
\\&&-
\begin{pmatrix}
A(G_{1})\otimes \mathbf{1}_{n_{2}}^{T} \\
R(G_{1})^{T}\otimes \mathbf{1}_{n_{2}}^{T}
\end{pmatrix}
(I_{n_{1}}\otimes ((x-2r_{1})I_{n_{2}}-L(G_{2})))^{-1}
\begin{pmatrix}
A(G_{1})\otimes \mathbf{1}_{n_{2}} & R(G_{1})\otimes \mathbf{1}_{n_{2}}
\end{pmatrix}\\\\
&=&\left(\begin{smallmatrix}
(x-r_{1}(1+n_{2}))I_{n_{1}}-L(G_{1})-\Gamma_{L(G_{2})}(x-2r_{1})A(G_{1})^{2} & R(G_{1})-\Gamma_{L(G_{2})}(x-2r_{1})A(G_{1})R(G_{1}) \\\\
R(G_{1})^{T}-\Gamma_{L(G_{2})}(x-2r_{1})R(G_{1})^{T}A(G_{1}) & (x-2n_{2}-2r_{1})I_{m_{1}}+A(\mathcal{L}({G_{1}}))-\Gamma_{L(G_{2})}(x-2r_{1})R(G_{1})^{T}R(G_{1})
\end{smallmatrix}\right)\\\\
&=&\left(\begin{smallmatrix}
(x-r_{1}(1+n_{2}))I_{n_{1}}-L(G_{1})-\Gamma_{L(G_{2})}(x-2r_{1})A(G_{1})^{2} & R(G_{1})-\Gamma_{L(G_{2})}(x-2r_{1})A(G_{1})R(G_{1}) \\\\
R(G_{1})^{T}-\Gamma_{L(G_{2})}(x-2r_{1})R(G_{1})^{T}A(G_{1}) & (x-2-2n_{2}-2r_{1})I_{m_{1}}+(1-\Gamma_{L(G_{2})}(x-2r_{1}))R(G_{1})^{T}R(G_{1})
\end{smallmatrix}\right).
\end{eqnarray*}
\begin{eqnarray*}
\begin{array}{lcl}
\det(S)=\det((x-2-2n_{2}-2r_{1})I_{m_{1}}+(1-\Gamma_{L(G_{2})}(x-2r_{1}))R(G_{1})^{T}R(G_{1}))\\~~~~~~~~~~~~~ \det(((x-r_{1}(1+n_{2}))I_{n_{1}}-L(G_{1})-\Gamma_{L(G_{2})}(x-2r_{1})A(G_{1})^{2})\\~~~~~~~~~~~~~~~-( R-\Gamma_{L(G_{2})}(x-2r_{1})A(G_{1})R)((x-2-2n_{2}-2r_{1})I_{m_{1}}+(1-\Gamma_{L(G_{2})}(x-2r_{1}))R(G_{1})^{T}R(G_{1}))^{-1}\\~~~~~~~~~~~~~~~~~~~~~(R^{T}-\Gamma_{L(G_{2})}(x-2r_{1})R^{T}A(G_{1})))\\\\
~~~~~~~~=\det((x-2-2n_{2}-2r_{1})I_{m_{1}}+(1-\Gamma_{L(G_{2})}(x-2r_{1}))(A(\mathcal{L}({G_{1}}))+2I_{m_{1}}))\\~~~~~~~~~~~~~ \det(((x-r_{1}(1+n_{2}))I_{n_{1}}-L(G_{1})-\Gamma_{L(G_{2})}(x-2r_{1})A(G_{1})^{2})\\~~~~~~~~~~~~~~~-( R-\Gamma_{L(G_{2})}(x-2r_{1})A(G_{1})R)((x-2-2n_{2}-2r_{1})I_{m_{1}}+(1-\Gamma_{L(G_{2})}(x-2r_{1}))R(G_{1})^{T}R(G_{1}))^{-1}\\~~~~~~~~~~~~~~~~~~~~~(R^{T}-\Gamma_{L(G_{2})}(x-2r_{1})R^{T}A(G_{1})))\\\\
~~~~~~~~=(x-2-2n_{2}-2r_{1})^{m_{1}-n_{1}}\prod \limits_{i=1}^{n_{1}}\{(x-2-2n_{2}-2r_{1})+(1-\Gamma_{L(G_{2})}(x-2r_{1}))(\lambda_{i}(G_{1})+r_{1})\}\\~~~~~~~~~~~~~ \det(((x-r_{1}(1+n_{2}))I_{n_{1}}-L(G_{1})-\Gamma_{L(G_{2})}(x-2r_{1})A(G_{1})^{2})\\~~~~~~~~~~~~~~~-( R-\Gamma_{L(G_{2})}(x-2r_{1})A(G_{1})R)((x-2-2n_{2}-2r_{1})I_{m_{1}}+(1-\Gamma_{L(G_{2})}(x-2r_{1}))R(G_{1})^{T}R(G_{1}))^{-1}\\~~~~~~~~~~~~~~~~~~~~~(R^{T}-\Gamma_{L(G_{2})}(x-2r_{1})R^{T}A(G_{1})))\\\\
~~~~~~~~=(x-2-2n_{2}-2r_{1})^{m_{1}-n_{1}}\prod \limits_{i=1}^{n_{1}}\{(x-2-2n_{2}-2r_{1})+(1-\frac{n_{2}}{x-2r_{1}})(\lambda_{i}(G_{1})+r_{1})\}\\~~~~~~~~~~~~~ \det(((x-r_{1}(1+n_{2}))I_{n_{1}}-L(G_{1})-\Gamma_{L(G_{2})}(x-2r_{1})A(G_{1})^{2})\\~~~~~~~~~~~~~~~-( R-\Gamma_{L(G_{2})}(x-2r_{1})A(G_{1})R)((x-2-2n_{2}-2r_{1})I_{m_{1}}+(1-\Gamma_{L(G_{2})}(x-2r_{1}))R(G_{1})^{T}R(G_{1}))^{-1}\\~~~~~~~~~~~~~~~~~~~~~(R^{T}-\Gamma_{L(G_{2})}(x-2r_{1})R^{T}A(G_{1})))\\\\
~~~~~~~~=\frac{(x-2-2n_{2}-2r_{1})^{m_{1}-n_{1}}}{(x-2r_{1})^{n_{1}}}\prod \limits_{i=1}^{n_{1}}\{(x-2-2n_{2}-2r_{1})(x-2r_{1})+(x-2r_{1}-n_{2})(\lambda_{i}(G_{1})+r_{1})\}\\~~~~~~~~~~~~~ \det(((x-r_{1}(1+n_{2}))I_{n_{1}}-L(G_{1})-\Gamma_{L(G_{2})}(x-2r_{1})A(G_{1})^{2})\\~~~~~~~~~~~~~~~-( R-\Gamma_{L(G_{2})}(x-2r_{1})A(G_{1})R)((x-2-2n_{2}-2r_{1})I_{m_{1}}+(1-\Gamma_{L(G_{2})}(x-2r_{1}))R(G_{1})^{T}R(G_{1}))^{-1}\\~~~~~~~~~~~~~~~~~~~~~(R^{T}-\Gamma_{L(G_{2})}(x-2r_{1})R^{T}A(G_{1})))\\\\
~~~~~~~~=\frac{(x-2-2n_{2}-2r_{1})^{m_{1}-n_{1}}}{(x-2r_{1})^{n_{1}}}\prod \limits_{i=1}^{n_{1}}\{x^{2}-(2+2r_{1}+2n_{2}+\mu_{i}(G_{1}))x+2r_{1}(2+2n_{2}+2r_{1})+(2r_{1}+n_{2})(\mu_{i}(G_{1})-2r_{1})\}\\~~~~~~~~~~~~~ \det(((x-r_{1}(1+n_{2}))I_{n_{1}}-L(G_{1})-\Gamma_{L(G_{2})}(x-2r_{1})A(G_{1})^{2})\\~~~~~~~~~~~~~~~-( R-\Gamma_{L(G_{2})}(x-2r_{1})A(G_{1})R)((x-2-2n_{2}-2r_{1})I_{m_{1}}+(1-\Gamma_{L(G_{2})}(x-2r_{1}))R(G_{1})^{T}R(G_{1}))^{-1}\\~~~~~~~~~~~~~~~~~~~~~(R^{T}-\Gamma_{L(G_{2})}(x-2r_{1})R^{T}A(G_{1})))\\\\
\end{array}
\end{eqnarray*}
Therefore
\begin{eqnarray*}
\begin{array}{lcl}
f_{L(G_{1} \boxdot_{T} G_{2})}(x)=(x-2-2n_{2}-2r_{1})^{m_{1}-n_{1}}\prod \limits_{j=2}^{n_{2}}\{(x-2r_{1}-\mu_{i}(G_{2}))^{n_{1}}\}\\~~~~\prod \limits_{i=1}^{n_{1}}\{(x^{2}-(2+2n_{2}+2r_{1}+\mu_{i}(G_{1}))x+2r_{1}(2+2n_{2}+2r_{1})+(2r_{1}+n_{2})(\mu_{i}(G_{1})-2r_{1})\}\\
\det(((x-r_{1}(1+n_{2}))I_{n_{1}}-L(G_{1})-\Gamma_{L(G_{2})}(x-2r_{1})A(G_{1})^{2})-(R-\Gamma_{L(G_{2})}(x-2r_{1})A(G_{1})R)((x-4-2n_{2}-r_{1})I_{m_{1}}+(1-\Gamma_{L(G_{2})}(x-2r_{1}))R^{T}R)^{-1}
(R^{T}+\Gamma_{L(G_{2})}(x-2r_{1})R^{T}A(G_{1}))).
\end{array}
\end{eqnarray*}
\end{proof}
\subsection{Spectra of $T$-edge neighbourhood corona}
In this section we determine adjacency spectrum and Laplacian spectrum of $T$-edge neighbourhood corona of two graphs.
\subsubsection{A-spectra of $T$-edge neighbourhood corona}
Let $G_{1}$ be a $r_{1}$-regular graph on $n_{1}$ vertices and $m_{1}$ edges and $G_{2}$ be any arbitrary graph with $n_{2}$ vertices.
Then the adjacency matrix of $A(G_{1}\boxminus_{T} G_{2})$ can be written as:\\
$$
A(G_{1}\boxminus_{T} G_{2})=
\begin{pmatrix}
A(G_{1}) & R(G_{1}) & R(G_{1})\otimes \mathbf{1}_{n_{2}}^{T}\\\\
R(G_{1})^{T} & A(\mathcal{L}({G_{1}})) & O_{m_{1}\times m_{1}n_{2}}\\\\
R(G_{1})^{T}\otimes \mathbf{1}_{n_{2}} & O_{m_{1}n_{2}\times m_{1}} & I_{m_{1}}\otimes A(G_{2})
\end{pmatrix}.
$$
\begin{theorem}\label{t3}
Let $G_{1}$ be a $r_{1}$-regular graph on $n_{1}$ vertices and $m_{1}$ edges and $G_{2}$ be any arbitrary graph with $n_{2}$ vertices. Then the adjacency characteristic polynomial of $G_{1} \boxminus_{T} G_{2}$ be:
\begin{eqnarray*}
f_{A(G_{1} \boxminus_{T} G_{2})}(x)&=&
(x+2)^{m_{1}-n_{1}}\prod \limits_{j=1}^{n_{2}}\{x-\lambda_{j}(G_{2})\}^{m_{1}}\\&&\prod \limits_{i=1}^{n_{1}}\{x^{2}+(2-r_{1}\Gamma_{A(G_{2})}(x)-r_{1}-(\Gamma_{A(G_{2})}(x)+2)\lambda_{i}(G_{1}))x+(1+\Gamma_{A(G_{2})}(x))(\lambda_{i}(G_{1}))^{2}\\&&+((2r_{1}-2)\Gamma_{A(G_{2})}(x)+r_{1}-3)\lambda_{i}(G_{1})+r_{1}(r_{1}-2)\Gamma_{A(G_{2})}(x)-r_{1})\}.
\end{eqnarray*}
\end{theorem}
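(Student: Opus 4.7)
The plan is to compute $\det(xI - A(G_1 \boxminus_T G_2))$ by applying the Schur complement (Lemma \ref{schur}) twice, peeling off one diagonal block at a time. First I peel off the $m_1 n_2 \times m_1 n_2$ bottom-right block $I_{m_1} \otimes (xI_{n_2} - A(G_2))$, whose determinant is $\prod_{j=1}^{n_2}(x - \lambda_j(G_2))^{m_1}$ and whose inverse is $I_{m_1} \otimes (xI_{n_2} - A(G_2))^{-1}$. Because the $(2,3)$ and $(3,2)$ blocks of $xI - A(G_1 \boxminus_T G_2)$ vanish, the Schur correction sits entirely in the $(1,1)$ position and, using the Kronecker identity $(M_1 \otimes M_2)(M_3 \otimes M_4) = (M_1 M_3)\otimes(M_2 M_4)$ together with the definition of $\Gamma$, equals $\Gamma_{A(G_2)}(x)\, RR^T = \Gamma_{A(G_2)}(x)(A(G_1) + r_1 I_{n_1})$ by $r_1$-regularity of $G_1$.

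Next, rewrite the remaining $(n_1 + m_1) \times (n_1 + m_1)$ matrix using $R^T R = A(\mathcal{L}(G_1)) + 2 I_{m_1}$, so its $(2,2)$ block becomes $(x+2)I_{m_1} - R^T R$. Apply Schur complement again to this block. By Lemma \ref{line1}, $\det((x+2)I_{m_1} - R^T R) = (x+2)^{m_1 - n_1}\prod_{i=1}^{n_1}(x + 2 - r_1 - \lambda_i(G_1))$. For the inner correction I exploit the commutation identity $R\bigl((x+2)I_{m_1} - R^T R\bigr)^{-1} R^T = \bigl((x+2)I_{n_1} - RR^T\bigr)^{-1} RR^T$, which expresses the final Schur complement purely as a rational function of $A(G_1)$. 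Since $A(G_1)$ is symmetric, this matrix is simultaneously diagonalized in the eigenbasis of $A(G_1)$, so its determinant reduces to a product indexed by the eigenvalues $\lambda_i(G_1)$.

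In that eigenbasis, writing $\gamma = \Gamma_{A(G_2)}(x)$, the $i$-th diagonal entry is $x - (1+\gamma)\lambda_i(G_1) - r_1\gamma - \dfrac{\lambda_i(G_1) + r_1}{x + 2 - r_1 - \lambda_i(G_1)}$. Clearing the common denominator $x + 2 - r_1 - \lambda_i(G_1)$ produces factors that cancel exactly against the intermediate determinant $\prod_i(x + 2 - r_1 - \lambda_i(G_1))$ from the previous step. What survives is, for each $i$, a quadratic numerator in $x$ whose coefficients are polynomials in $\lambda_i(G_1)$ and $\gamma$. Multiplying together the four pieces, namely $\prod_j(x - \lambda_j(G_2))^{m_1}$, the factor $(x+2)^{m_1 - n_1}$, the cancellation-surviving product $\prod_i(\text{quadratic})$, and the trivial $1$ from the cancelled denominator, yields the formula claimed.

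The main obstacle is purely algebraic bookkeeping in the last step: expanding $\bigl(x - (1+\gamma)\lambda - r_1\gamma\bigr)\bigl(x + 2 - r_1 - \lambda\bigr) - (\lambda + r_1)$ and grouping the result as $x^2 + [\,\cdot\,]x + [\,\cdot\,]$ with coefficients in $\lambda$ and $\gamma$ matching the statement. Both Schur reductions and the $R(\alpha I - R^T R)^{-1} R^T$ commutation are otherwise routine given the regularity hypothesis on $G_1$.
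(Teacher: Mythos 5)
Your proof is correct and reproduces the stated polynomial exactly: expanding $\bigl(x-(1+\gamma)\lambda-r_1\gamma\bigr)\bigl(x+2-r_1-\lambda\bigr)-(\lambda+r_1)$ gives $x^2+(2-r_1\gamma-r_1-(\gamma+2)\lambda)x+(1+\gamma)\lambda^2+((2r_1-2)\gamma+r_1-3)\lambda+r_1(r_1-2)\gamma-r_1$, matching the theorem coefficient by coefficient. The skeleton agrees with the paper's: a first Schur reduction eliminating the block $I_{m_1}\otimes(xI_{n_2}-A(G_2))$, whose only correction is $\Gamma_{A(G_2)}(x)RR^T$ in the $(1,1)$ position, followed by reducing the remaining $(n_1+m_1)\times(n_1+m_1)$ determinant to a function of $A(G_1)$ evaluated on its eigenvalues. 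You differ in the middle step: you Schur-complement on $(x+2)I_{m_1}-R^TR$ and invoke the push-through identity $R\bigl(\alpha I_{m_1}-R^TR\bigr)^{-1}R^T=\bigl(\alpha I_{n_1}-RR^T\bigr)^{-1}RR^T$, so the factors $x+2-r_1-\lambda_i(G_1)$ coming from $\det\bigl((x+2)I_{m_1}-R^TR\bigr)$ must cancel against the denominators of the diagonal entries. The paper instead rewrites the $(1,1)$ block as $(x+r_1)I_{n_1}-(1+\Gamma_{A(G_2)}(x))RR^T$ and performs an explicit block row operation that turns the $(2,2)$ block into $(x+2)I_{m_1}$ exactly, then eliminates the off-diagonal block, so no rational cancellation ever appears; the price is carrying $RR^TRR^T=(A(G_1)+r_1I_{n_1})^2$ terms through the bookkeeping. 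Both are valid; your route only needs the (harmless) caveat that the push-through identity is applied for generic $x$ where $(x+2)I_{m_1}-R^TR$ is invertible, since the final identity is between polynomials. Incidentally, your exponent $m_1$ on $\prod_j(x-\lambda_j(G_2))$ is the correct one; the paper's intermediate display writes $n_1$ there, a slip corrected in its final formula.
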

\begin{proof}
\small
The adjacency characteristic polynomial of $G_{1} \boxminus_{T} G_{2}$ is
\begin{eqnarray*}
f_{A(G_{1} \boxminus_{T} G_{2})}(x)
&=&\det(xI_{m_{1}(1+n_{2})+n_{1}}-A(G_{1}\boxminus_{T} G_{2}))\\
&=&\det\begin{pmatrix}
xI_{n_{1}}-A(G_{1}) & -R(G_{1}) & -R(G_{1})\otimes \mathbf{1}_{n_{2}}^{T}\\\\
-R(G_{1})^{T} & xI_{m_{1}}-A(\mathcal{L}({G_{1}})) & O_{m_{1}\times m_{1}n_{2}}\\\\
-R(G_{1})^{T}\otimes \mathbf{1}_{n_{2}} & O_{m_{1}n_{2}\times m_{1}} & I_{m_{1}}\otimes (xI_{n_{2}}-A(G_{2}))
\end{pmatrix}\\
&=&\det(I_{m_{1}}\otimes (xI_{n_{2}}-A(G_{2}))\det(S)=\prod \limits_{j=1}^{n_{2}}\{x-\lambda_{j}(G_{2})\}^{n_{1}}\det(S), where
\end{eqnarray*}
\begin{eqnarray*}
S&=&\begin{pmatrix}
xI_{n_{1}}-A(G_{1}) & -R(G_{1}) \\
-R(G_{1})^{T} & xI_{m_{1}}-A(\mathcal{L}({G_{1}}))
\end{pmatrix}
\\&&-
\begin{pmatrix}
-R(G_{1})\otimes \mathbf{1}_{n_{2}}^{T} \\
O_{m_{1}\times m_{1}n_{2}}
\end{pmatrix}
(I_{m_{1}}\otimes (xI_{n_{2}}-A(G_{2})))^{-1}
\begin{pmatrix}
-R(G_{1})^{T}\otimes \mathbf{1}_{n_{2}} & O_{m_{1}n_{2}\times m_{1}}
\end{pmatrix}\\\\
&=&\begin{pmatrix}
xI_{n_{1}}-A(G_{1})-\Gamma_{A(G_{2})}(x)RR^{T} & -R \\
-R^{T} & xI_{m_{1}}-A(\mathcal{L}(G_{1})
\end{pmatrix}.
\end{eqnarray*}
\begin{eqnarray*}
\det(S)&=&\det\begin{pmatrix}
xI_{n_{1}}-A(G_{1})-\Gamma_{A(G_{2})}(x)RR^{T} & -R \\\\
-R^{T} & xI_{m_{1}}-A(\mathcal{L}(G_{1}))
\end{pmatrix}\\\\
&=&\det\begin{pmatrix}
(x+r_{1})I_{n_{1}}-(1+\Gamma_{A(G_{2})}(x))RR^{T} & -R \\\\
-R^{T} & xI_{m_{1}}-A(\mathcal{L}(G_{1}))
\end{pmatrix}\\\\
&=&\det\begin{pmatrix}
(x+r_{1})I_{n_{1}}-(1+\Gamma_{A(G_{2})}(x))RR^{T} & -R \\\\
-(1+x+r_{1})R^{T}+(1+\Gamma_{A(G_{2})}(x))R^{T}RR^{T} & (x+2)I_{m_{1}}
\end{pmatrix}\\\\
&=&\det\begin{pmatrix}
(x+r_{1})I_{n_{1}}-(1+\Gamma_{A(G_{2})}(x))RR^{T}-\frac{1+x+r_{1}}{x+2}RR^{T}+\frac{1+\Gamma_{A(G_{2})}(x)}{x+2}RR^{T}RR^{T} & O\\\\
-(1+x+r_{1})R^{T}+(1+\Gamma_{A(G_{2})}(x))R^{T}RR^{T} & (x+2)I_{m_{1}}.
\end{pmatrix}
\end{eqnarray*}
\begin{eqnarray*}
\begin{array}{lcl}
\det(S)=\det((x+2)I_{m_{1}}) \det((x+r_{1})I_{n_{1}}+\frac{(1+\Gamma_{A(G_{2})}(x))}{x+2}A(G_{1})^{2}+\frac{2r_{1}(1+\Gamma_{A(G_{2})}(x))-2x-r_{1}-3-(x+2)\Gamma_{A(G_{2})}(x)}{x+2}A(G_{1})\\~~~~~~~~~~~~~~~~~~~~~~~~~~~~~~~~~~~~~~~~+\frac{r_{1}^{2}(1+\Gamma_{A(G_{2})}(x))-2xr_{1}-r_{1}^{2}-3r_{1}-(x+2)r_{1}\Gamma_{A(G_{2})}(x)}{x+2}I_{n_{1}})\\\\
~~~~~~~~=(x+2)^{m_{1}} \det(\frac{(1+\Gamma_{A(G_{2})}(x))}{x+2}A(G_{1})^{2}+\frac{(-\Gamma_{A(G_{2})}(x)-2)x-r_{1}-3-(2r_{1}-2)\Gamma_{A(G_{2})}(x)}{x+2}A(G_{1})\\~~~~~~~~~~~~~~~~~~~~~~~~~~~~~~~~~~~~~~~~+\frac{x^{2}+(2-r_{1}\Gamma_{A(G_{2})}(x)-r_{1})x+(r_{1}^{2}\Gamma_{A(G_{2})}(x)-2r_{1}\Gamma_{A(G_{2})}(x)-r_{1})}{x+2}I_{n_{1}})\\\\
~~~~~~~~=(x+2)^{m_{1}-n_{1}}\prod \limits_{i=1}^{n_{1}}\{x^{2}+(2-r_{1}\Gamma_{A(G_{2})}(x)-r_{1}-(\Gamma_{A(G_{2})}(x)+2)\lambda_{i}(G_{1}))x+(1+\Gamma_{A(G_{2})}(x))(\lambda_{i}(G_{1}))^{2}\\~~~~~~~~~~~~~~~~~~~~~~~~~~~~~~~+
((2r_{1}-2)\Gamma_{A(G_{2})}(x)+r_{1}-3)\lambda_{i}(G_{1})+r_{1}(r_{1}-2)\Gamma_{A(G_{2})}(x)-r_{1})\}\\\\
\end{array}
\end{eqnarray*}
Therefore
\begin{eqnarray*}
\begin{array}{lcl}
f_{A(G_{1} \boxminus_{T} G_{2})}(x)=(x+2)^{m_{1}-n_{1}}\prod \limits_{j=1}^{n_{2}}\{x-\lambda_{j}(G_{2})\}^{m_{1}}\prod \limits_{i=1}^{n_{1}}\{x^{2}+(2-r_{1}\Gamma_{A(G_{2})}(x)-r_{1}-(\Gamma_{A(G_{2})}(x)+2)\lambda_{i}(G_{1}))x\\~~~~~~~~~~~~~~~~~~~~~~~+(1+\Gamma_{A(G_{2})}(x))(\lambda_{i}(G_{1}))^{2}+((2r_{1}-2)\Gamma_{A(G_{2})}(x)+r_{1}-3)\lambda_{i}(G_{1})+r_{1}(r_{1}-2)\Gamma_{A(G_{2})}(x)-r_{1})\}.
\end{array}
\end{eqnarray*}
\end{proof}

\begin{corollary}
For $i=1,2$, let $G_{i}$ be an $r_{i}$-regular graph with $n_{i}$ vertices and $m_{i}$ edges. Then the adjacency spectrum of $G_{1}\boxminus_{T} G_{2}$ consists of:
\begin{enumerate}[(i)]
\item The eigenvalue $\lambda_{j}(G_{2})$ with multiplicity $m_{1}$ for every eigenvalue $\lambda_{j}$ $(j=2,3,\ldots,n_{2})$ of $A(G_{2})$,
\item The eigenvalue $r_{2}$ with multiplicity $m_{1}-n_{1}$,
\item The eigenvalue $-2$ with multiplicity $m_{1}-n_{1}$,
\item Three roots of the equation\\
$x^{3}+(2-r_{1}-r_{2}-2\lambda_{i}(G_{1}))x^{2}+(r_{1}r_{2}-r_{1}n_{2}-3r_{1}-(n_{2}-3r_{1}+3)\lambda_{i}(G_{1})+\lambda_{i}(G_{1})^{2})x+(n_{2}-r_{2})(\lambda_{i}(G_{1}))^{2}+(2r_{1}n_{2}-2n_{2}-r_{1}r_{2}+3r_{2})\lambda_{i}(G_{1})+r_{1}(r_{1}-2)n_{2}+r_{1}r_{2}=0$,\\
for each eigenvalue $\lambda_{i}$ $(i=1,2,\ldots,n_{1})$ of $A(G_{1})$.
\end{enumerate}
\end{corollary}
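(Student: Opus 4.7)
The plan is to specialize Theorem \ref{t3} to the case where $G_2$ is regular, by computing the coronal $\Gamma_{A(G_2)}(x)$ explicitly and reorganizing the resulting factorization into the form demanded by the corollary.

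Since $G_2$ is $r_2$-regular, $A(G_2)$ has constant row sum $r_2$, so Lemma \ref{le2} gives $\Gamma_{A(G_2)}(x) = \frac{n_2}{x - r_2}$. I would substitute this expression into the quadratic factor $Q_i(x)$ appearing in Theorem \ref{t3}. Each $Q_i(x)$ then becomes a rational function of $x$ whose only pole is a simple pole at $x = r_2$. Multiplying $Q_i(x)$ through by $(x - r_2)$ clears the denominator and yields a monic cubic $P_i(x)$ in $x$; routine expansion of its coefficients (of $x^2$, $x$, and the constant term) is what should reproduce the cubic displayed in item (iv).

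To balance the $n_1$ factors of $(x-r_2)$ inserted in the previous step, I would exploit that $\lambda_1(G_2) = r_2$, so that the spectral product of $G_2$ appearing in Theorem \ref{t3} splits as
\[
\prod_{j=1}^{n_2}\{x-\lambda_j(G_2)\}^{m_1} \;=\; (x-r_2)^{m_1}\prod_{j=2}^{n_2}\{x-\lambda_j(G_2)\}^{m_1}.
\]
Writing $(x-r_2)^{m_1} = (x-r_2)^{m_1-n_1}(x-r_2)^{n_1}$ and absorbing $(x-r_2)^{n_1}$ into the $n_1$ cubic factors rewrites the characteristic polynomial as
\[
(x+2)^{m_1-n_1}\,(x-r_2)^{m_1-n_1}\prod_{j=2}^{n_2}\{x-\lambda_j(G_2)\}^{m_1}\prod_{i=1}^{n_1}P_i(x),
\]
from which items (i)--(iv) can be read off directly. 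A quick count $m_1(n_2-1)+2(m_1-n_1)+3n_1 = m_1(1+n_2)+n_1$ matches the order of $A(G_1\boxminus_T G_2)$, confirming that no multiplicity has been miscounted.

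The main obstacle is purely computational: carefully expanding $(x-r_2)Q_i(x)$, keeping track of those terms in which the product $\Gamma_{A(G_2)}(x)\cdot(x-r_2) = n_2$ collapses and no longer contributes a denominator, and then verifying coefficient by coefficient that the result coincides with the cubic in (iv). Beyond this bookkeeping the argument presents no conceptual difficulty; its structural backbone is simply Theorem \ref{t3} combined with Lemma \ref{le2} and the observation that $\lambda_1(G_2)=r_2$.
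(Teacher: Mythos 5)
Your approach is exactly the intended one: the paper states this corollary without proof as an immediate specialization of Theorem~\ref{t3}, obtained by setting $\Gamma_{A(G_2)}(x)=\tfrac{n_2}{x-r_2}$ via Lemma~\ref{le2}, peeling off $(x-r_2)^{m_1}$ from the $j=1$ factor of the spectral product of $G_2$, and using $n_1$ of those factors to clear the simple pole in each quadratic factor, turning it into a monic cubic; your multiplicity count is also right. One caveat on your claim that the expansion ``reproduces the cubic displayed in (iv)'': carrying out $(x-r_2)Q_i(x)$ from the quadratic in Theorem~\ref{t3} actually gives the coefficient of $x$ as $\lambda_i(G_1)^2+(r_1+2r_2-n_2-3)\lambda_i(G_1)+r_1r_2-r_1n_2-r_1-2r_2$, which agrees with the printed coefficient $\lambda_i(G_1)^2+(3r_1-n_2-3)\lambda_i(G_1)+r_1r_2-r_1n_2-3r_1$ only when $r_1=r_2$ (the $x^2$ and constant coefficients do match), so either Theorem~\ref{t3} or the corollary as printed contains a slip in that one coefficient --- this is the paper's inconsistency, not a flaw in your method, but your derivation will surface it.
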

\begin{corollary}
If $G_{1}$ be a $r_{1}$-regular graph on $n_{1}$ vertices and $m_{1}$ edges and $G_{2}$ be complete bipartite graph $K_{p,q}$, then  the adjacency spectrum of $G_{1}\boxminus_{T} K_{p,q}$ consists of:
\begin{enumerate}[(i)]
\item The eigenvalue $0$ with multiplicity $m_{1}(p+q-2)$,
\item The eigenvalue $pq$ with multiplicity $m_{1}-n_{1}$,
\item The eigenvalue $-2$ with multiplicity $m_{1}-n_{1}$,
\item Four roots of the equation\\
$x^{4}+(2-r_{1}-2\lambda_{i}(G_{1}))x^{3}+(-pq-(p+q)(r_{1}+\lambda_{i}(G_{1}))+(\lambda_{i}(G_{1}))^{2}+(r_{1}-3)\lambda_{i}(G_{1})-r_{1})x^{2}+(-2pq-r_{1}pq+(p+q)(\lambda_{i}(G_{1}))^{2}+(2r_{1}-2)(p+q)\lambda_{i}(G_{1})+r_{1}(r_{1}-2)(p+q))x+pq(\lambda_{i}(G_{1}))^{2}+((2r_{1}-2)2pq-pq(r_{1}-3))\lambda_{i}(G_{1})+r_{1}(r_{1}-2)2pq+r_{1}pq=0$,\\
for each eigenvalue $\lambda_{i}$ $(i=1,2,\ldots,n_{1})$ of $A(G_{1})$.
\end{enumerate}
\end{corollary}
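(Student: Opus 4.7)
The plan is to derive the corollary by specializing Theorem~\ref{t3} to $G_{2}=K_{p,q}$. Since $K_{p,q}$ is not regular, Lemma~\ref{le2} does not apply, so the first step is to compute $\Gamma_{A(K_{p,q})}(x)$ directly. Exploiting the two-block symmetry of $A(K_{p,q})=\bigl(\begin{smallmatrix}O_{p} & J_{p\times q}\\ J_{q\times p} & O_{q}\end{smallmatrix}\bigr)$, I would look for $(xI-A)^{-1}\mathbf{1}$ in the form $(a\mathbf{1}_{p},\,b\mathbf{1}_{q})^{T}$; the resulting $2\times 2$ system $xa-qb=1,\ xb-pa=1$ yields
\[
\Gamma_{A(K_{p,q})}(x)=\frac{(p+q)x+2pq}{x^{2}-pq}.
\]

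Next I would collect the spectral data of $K_{p,q}$: eigenvalues $\pm\sqrt{pq}$ (each simple) and $0$ with multiplicity $p+q-2$, together with $n_{2}=p+q$. Substituting into Theorem~\ref{t3} turns the factor $\prod_{j=1}^{n_{2}}(x-\lambda_{j}(G_{2}))^{m_{1}}$ into $x^{m_{1}(p+q-2)}(x^{2}-pq)^{m_{1}}$, which already supplies the $0$-eigenvalues of item~(i) and, after the cancellation below, the $\pm\sqrt{pq}$-eigenvalues appearing in item~(ii).

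The core of the argument is the substitution of the explicit rational $\Gamma_{A(G_{2})}(x)$ into each of the $n_{1}$ quadratic-in-$x$ factors of Theorem~\ref{t3}. Each factor has denominator $x^{2}-pq$, so multiplying numerator and denominator by $x^{2}-pq$ converts the $i$-th factor into a polynomial of degree four in $x$ while consuming one copy of $(x^{2}-pq)$ from the preceding block. Performing this for all $i=1,\dots,n_{1}$ reduces the exponent of $(x^{2}-pq)$ from $m_{1}$ to $m_{1}-n_{1}$, producing the stated multiplicity for the $\pm\sqrt{pq}$ eigenvalues, while the untouched $(x+2)^{m_{1}-n_{1}}$ gives item~(iii) and the $n_{1}$ quartics give the $4n_{1}$ roots of item~(iv).

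The main obstacle is the careful algebraic expansion
\[
(x^{2}-pq)\Bigl\{x^{2}+\bigl(2-r_{1}\Gamma-r_{1}-(\Gamma+2)\lambda_{i}(G_{1})\bigr)x+(1+\Gamma)\lambda_{i}(G_{1})^{2}+\bigl((2r_{1}-2)\Gamma+r_{1}-3\bigr)\lambda_{i}(G_{1})+r_{1}(r_{1}-2)\Gamma-r_{1}\Bigr\}
\]
with $\Gamma=\Gamma_{A(K_{p,q})}(x)$: one must track contributions such as $r_{1}\Gamma(x^{2}-pq)=r_{1}\bigl((p+q)x+2pq\bigr)$ and match the resulting quartic coefficient-by-coefficient to the expression in item~(iv). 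A total-degree check, verifying that the sum of multiplicities equals $m_{1}(1+p+q)+n_{1}$, serves as a convenient consistency test for the bookkeeping.
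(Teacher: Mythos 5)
Your approach is the right one and is evidently what the paper intends (the corollary is stated without proof): compute $\Gamma_{A(K_{p,q})}(x)=\frac{(p+q)x+2pq}{x^{2}-pq}$ via the two-block ansatz, substitute into Theorem~\ref{t3}, and clear the denominator $x^{2}-pq$ from each of the $n_{1}$ factors, consuming $(x^{2}-pq)^{n_{1}}$ out of the $(x^{2}-pq)^{m_{1}}$ supplied by $\prod_{j}(x-\lambda_{j}(K_{p,q}))^{m_{1}}$. I verified that the resulting quartic matches item~(iv) coefficient by coefficient, so the computational core of your plan goes through.

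One point deserves explicit attention rather than the passing phrase you use. Your derivation produces the eigenvalues $+\sqrt{pq}$ and $-\sqrt{pq}$, \emph{each} with multiplicity $m_{1}-n_{1}$, whereas item~(ii) as printed asserts the single eigenvalue $pq$ with multiplicity $m_{1}-n_{1}$; these are not the same claim, so your proof does not establish item~(ii) as literally stated. The total-multiplicity check you propose in fact settles the matter: the printed list accounts for only $m_{1}(p+q)+2n_{1}$ eigenvalues, which falls short of the order $m_{1}(1+p+q)+n_{1}$ of $G_{1}\boxminus_{T}K_{p,q}$ by exactly $m_{1}-n_{1}$, while your version with $\pm\sqrt{pq}$ each of multiplicity $m_{1}-n_{1}$ gives the correct total. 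So you should state plainly that you are proving a corrected form of item~(ii), not present $\pm\sqrt{pq}$ as if it were what item~(ii) already says.
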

\begin{corollary}
\begin{enumerate}[(a)]
\item If $H_{1}$ and $H_{2}$ are $A$-cospectral regular graphs, and $H$ is a regular graph, then $H_{1}\boxminus_{T} H$ and $H_{2}\boxminus_{T} H$; and $H\boxminus_{T} H_{1}$ and $H\boxminus_{T} H_{2}$ are $A$-cospectral.
\item If $F_{1}$ and $F_{2}$; and $H_{1}$ and $H_{2}$ are $A$-cospectral regular graphs, then $F_{1}\boxminus_{T} H_{1}$ and $F_{2}\boxminus_{T} H_{2}$ are $A$-cospectral.
\end{enumerate}
\end{corollary}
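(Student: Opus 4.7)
The plan is to derive the corollary as an immediate corollary of Theorem \ref{t3}, whose formula expresses $f_{A(G_{1} \boxminus_{T} G_{2})}(x)$ entirely in terms of a short list of parameters: $n_{1}$, $m_{1}$, $r_{1}$, the spectrum $\{\lambda_{i}(G_{1})\}$, the coronal $\Gamma_{A(G_{2})}(x)$, the integer $n_{2}$, and the eigenvalues $\{\lambda_{j}(G_{2})\}$. My strategy is to verify that, under the hypotheses of the corollary, every one of these ingredients is preserved when we replace $H_{1}$ by $H_{2}$, or $F_{1}$ by $F_{2}$, so that the characteristic polynomials agree identically.

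First I would record the standard observation that $A$-cospectral graphs share the same number of vertices (the cardinality of the spectrum) and the same sum of squared eigenvalues (namely $2m$), and that for regular graphs the regularity equals the largest eigenvalue. Consequently, two $A$-cospectral regular graphs have identical $n$, $r$, and $m$. Next I would invoke Lemma \ref{le2}: for an $r$-regular graph on $n$ vertices the row sums of $A$ are all equal to $r$, so $\Gamma_{A(G)}(x)=\frac{n}{x-r}$, which is itself determined by the spectrum. This is the pivotal point and the only place where the regularity hypothesis on the second factor is used.

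For part (a), in the pairing $H_{1}\boxminus_{T} H$ versus $H_{2}\boxminus_{T} H$, the preceding remarks say that $r_{1}$, $n_{1}$, $m_{1}$, and $\{\lambda_{i}(H_{\bullet})\}$ coincide, while every quantity depending on $H$ is unchanged; so each factor on the right-hand side of Theorem \ref{t3} is the same for the two graphs, forcing the polynomials to be equal. For the pairing $H\boxminus_{T} H_{1}$ versus $H\boxminus_{T} H_{2}$ only the ingredients $n_{2}$, $\{\lambda_{j}(H_{\bullet})\}$, and $\Gamma_{A(H_{\bullet})}(x)$ vary, and all three agree by the previous paragraph. For part (b) I would chain two applications of part (a), passing first from $F_{1}\boxminus_{T} H_{1}$ to $F_{2}\boxminus_{T} H_{1}$ and then to $F_{2}\boxminus_{T} H_{2}$.

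I do not anticipate a genuine obstacle, since this is essentially a bookkeeping consequence of Theorem \ref{t3}. The only content worth emphasizing is the use of Lemma \ref{le2}: without regularity of the second factor, the coronal $\Gamma_{A(G_{2})}(x)$ is not in general a spectral invariant, and the argument for $H\boxminus_{T} H_{1}\cong_{A} H\boxminus_{T} H_{2}$ would break down. Thus the proof is short, but the regularity hypothesis is genuinely needed in exactly this one place.
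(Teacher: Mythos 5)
Your proposal is correct and follows exactly the route the paper intends: the corollary is stated without proof as an immediate consequence of Theorem \ref{t3} (together with Lemma \ref{le2}, which makes $\Gamma_{A(G_{2})}(x)=\tfrac{n_{2}}{x-r_{2}}$ a spectral invariant for regular $G_{2}$, as the preceding corollary already makes explicit). Your bookkeeping of why $n$, $m$, $r$ are determined by the $A$-spectrum of a regular graph, and the chaining argument for part (b), are exactly the intended justification.
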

\subsubsection{$L$-spectra of $T$-edge neighbourhood corona}
Let $G_{1}$ be a $r_{1}$-regular graph on $n_{1}$ vertices and $m_{1}$ edges and $G_{2}$ be any arbitrary graph with $n_{2}$ vertices.
Then the Laplacian matrix of $G_{1}\boxminus_{T} G_{2}$ can be written as:\\
$$
L(G_{1}\boxminus_{T} G_{2})=
\begin{pmatrix}
L(G_{1})+r_{1}(1+n_{2})I_{n_{1}} & -R(G_{1}) & -R(G_{1})\otimes \mathbf{1}_{n_{2}}^{T}\\\\
-R(G_{1})^{T} & 2r_{1}I_{m_{1}}-A(\mathcal{L}({G_{1}})) & O_{m_{1}\times m_{1}n_{2}}\\\\
-R(G_{1})^{T}\otimes \mathbf{1}_{n_{2}} & O_{m_{1}n_{2}\times m_{1}} & I_{m_{1}}\otimes (L(G_{2})+2I_{n_{2}})
\end{pmatrix}.
$$
\begin{theorem}\label{t4}
Let $G_{1}$ be a $r_{1}$-regular graph on $n_{1}$ vertices and $m_{1}$ edges and $G_{2}$ be any arbitrary graph with $n_{2}$ vertices. Then the Laplacian characteristic polynomial of $G_{1} \boxminus_{T} G_{2}$ be:
\begin{eqnarray*}
f_{L(G_{1} \boxminus_{T} G_{2})}(x)&=&
(x-2-2r_{1})^{m_{1}-n_{1}}\prod \limits_{j=2}^{n_{2}}\{(x-2-\mu_{i}(G_{2}))^{m_{1}}\}\\&&\prod \limits_{i=1}^{n_{1}}\{(x^{2}-(r_{1}(7+n_{2})+2-r_{1}\Gamma_{L({G_{2}})}(x-2)+(2-\Gamma_{L({G_{2}})}(x-2))(r_{1}-\mu_{i}(G_{1})))x\\&&+(1+\Gamma_{L({G_{2}})}(x-2))(r_{1}-\mu_{i}(G_{1}))^{2}+r_{1}(3+n_{2})(2r_{1}+2)+4r_{1}^{2}+3r_{1}+r_{1}^{2}n_{2}\\&&+(r_{1}(7+n_{2})+3-(4r_{1}+2)\Gamma_{L({G_{2}})}(x-2))(r_{1}-\mu_{i}(G_{1}))-r_{1}(r_{1}+2)\Gamma_{L({G_{2}})}(x-2)).
\end{eqnarray*}
\end{theorem}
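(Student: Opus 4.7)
The plan is to apply the Schur complement formula (Lemma~\ref{schur}) twice to the $3\times 3$ block form of $xI_{m_{1}(1+n_{2})+n_{1}}-L(G_{1}\boxminus_{T}G_{2})$ displayed just above the theorem, paralleling the arguments used for Theorems~\ref{t1}--\ref{t3}.

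First I would eliminate the bottom--right diagonal block $I_{m_{1}}\otimes\big((x-2)I_{n_{2}}-L(G_{2})\big)$, whose determinant is $\prod_{j=1}^{n_{2}}\bigl(x-2-\mu_{j}(G_{2})\bigr)^{m_{1}}$. Using the Kronecker identity $(P\otimes Q)(R\otimes S)=PR\otimes QS$ together with $\mathbf{1}_{n_{2}}^{T}\big((x-2)I_{n_{2}}-L(G_{2})\big)^{-1}\mathbf{1}_{n_{2}}=\Gamma_{L(G_{2})}(x-2)$, and noting that the $(2,3)$ and $(3,2)$ blocks of $xI-L(G_{1}\boxminus_{T}G_{2})$ vanish, the only non--trivial correction to the remaining $2\times 2$ block occurs in the $(1,1)$ position and equals $\Gamma_{L(G_{2})}(x-2)\,RR^{T}$. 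The resulting Schur complement is
$$S=\begin{pmatrix} (x-r_{1}(1+n_{2}))I_{n_{1}}-L(G_{1})-\Gamma_{L(G_{2})}(x-2)\,RR^{T} & R \\ R^{T} & (x-2r_{1})I_{m_{1}}+A(\mathcal{L}(G_{1}))\end{pmatrix}.$$

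Next I would use $RR^{T}=A(G_{1})+r_{1}I_{n_{1}}$ and $A(\mathcal{L}(G_{1}))=R^{T}R-2I_{m_{1}}$ to rewrite the $(2,2)$ block as $(x-2-2r_{1})I_{m_{1}}+R^{T}R$, and apply Lemma~\ref{schur} a second time to eliminate this cleaner block. Since $R^{T}R$ has eigenvalues $\{\lambda_{i}(G_{1})+r_{1}\}_{i=1}^{n_{1}}=\{2r_{1}-\mu_{i}(G_{1})\}_{i=1}^{n_{1}}$ together with $0$ of multiplicity $m_{1}-n_{1}$, the determinant of this block produces the factor $(x-2-2r_{1})^{m_{1}-n_{1}}\prod_{i=1}^{n_{1}}(x-2-\mu_{i}(G_{1}))$. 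For the remaining $n_{1}\times n_{1}$ determinant I would use the commutation identity $R(\alpha I_{m_{1}}+R^{T}R)^{-1}R^{T}=(\alpha I_{n_{1}}+RR^{T})^{-1}RR^{T}$ with $\alpha=x-2-2r_{1}$, and exploit the fact that $A(G_{1}),L(G_{1}),RR^{T}$ are simultaneously diagonalisable (all being polynomials in $A(G_{1})$ because $G_{1}$ is $r_{1}$--regular). Evaluating in a common eigenbasis of $A(G_{1})$, the $i$--th eigenspace contributes a scalar rational function whose numerator, combined with the $(x-2-\mu_{i}(G_{1}))$ already extracted, is precisely the $i$--th quadratic factor stated in the theorem.

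The main obstacle will be the closing algebraic simplification. By Lemma~\ref{le2}, $\Gamma_{L(G_{2})}(x-2)=n_{2}/(x-2)$, so the eigenvalue--wise expressions are rational in $x$ with poles at $x=2$ whose order must be tracked against the $(x-2)$--factors arising from $\mu_{1}(G_{2})=0$ in the product over $j$. Carrying out the two successive Schur expansions, substituting the relations $RR^{T}=A(G_{1})+r_{1}I$ and $R^{T}R=A(\mathcal{L}(G_{1}))+2I$ throughout, and collecting powers of $(x-2)$, $(x-2-2r_{1})$ and $\mu_{i}(G_{1})$ until the expression matches the quadratic $i$--factor in the statement is where essentially all of the computational weight lies. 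Once this bookkeeping is complete, multiplying the three factors $(x-2-2r_{1})^{m_{1}-n_{1}}$, $\prod_{j=2}^{n_{2}}(x-2-\mu_{j}(G_{2}))^{m_{1}}$ and $\prod_{i=1}^{n_{1}}\{\cdots\}$ yields the claimed formula for $f_{L(G_{1}\boxminus_{T}G_{2})}(x)$.
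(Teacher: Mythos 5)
Your proposal is correct and follows essentially the same route as the paper: a first Schur complement eliminating the $G_{2}$-blocks (only the $(1,1)$ block picks up $-\Gamma_{L(G_{2})}(x-2)RR^{T}$, since the $(2,3)$ and $(3,2)$ blocks vanish), followed by elimination of the line-graph block and eigenvalue-wise evaluation of a rational function of $A(G_{1})$ using $RR^{T}=A(G_{1})+r_{1}I_{n_{1}}$ and $R^{T}R=A(\mathcal{L}(G_{1}))+2I_{m_{1}}$. The only cosmetic difference is that the paper reduces the $(2,2)$ block to the scalar matrix $(x-2r_{1}-2)I_{m_{1}}$ by explicit block row and column operations and extracts $(x-2r_{1}-2)^{m_{1}}$, whereas you take the Schur complement of $(x-2-2r_{1})I_{m_{1}}+R^{T}R$ directly via the push-through identity; both yield the same quadratic factors.
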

\begin{proof}
\small
The Laplacian characteristic polynomial of $G_{1} \boxminus_{T} G_{2}$ is
\begin{eqnarray*}
f_{L(G_{1} \boxminus_{T} G_{2})}(x)
&=&\det(xI_{m_{1}(1+n_{2})+n_{1}}-L(G_{1}\boxminus_{T} G_{2}))\\
&=&\det\begin{pmatrix}
(x-r_{1}(1+n_{2})I_{n_{1}}-L(G_{1}) & R(G_{1}) & R(G_{1})\otimes \mathbf{1}_{n_{2}}^{T}\\\\
R(G_{1})^{T} & (x-2r_{1})I_{m_{1}}+A(\mathcal{L}({G_{1}})) & O_{m_{1}\times m_{1}n_{2}}\\\\
R(G_{1})^{T}\otimes \mathbf{1}_{n_{2}} & O_{m_{1}n_{2}\times m_{1}} & I_{m_{1}}\otimes ((x-2)I_{n_{2}}-L(G_{2}))
\end{pmatrix}\\
&=&\det(I_{n_{1}}\otimes ((x-2)I_{n_{2}}-L(G_{2}))\det(S)=\prod \limits_{j=1}^{n_{2}}\{x-2-\mu_{j}(G_{2})\}^{m_{1}}\det(S), where
\end{eqnarray*}
\begin{eqnarray*}
S&=&\begin{pmatrix}
(x-r_{1}(1+n_{2}))I_{n_{1}}-L(G_{1}) & R(G_{1}) \\
R(G_{1})^{T} & (x-2r_{1})I_{m_{1}}+A(\mathcal{L}({G_{1}}))
\end{pmatrix}
\\&&-
\begin{pmatrix}
R(G_{1})\otimes \mathbf{1}_{n_{2}}^{T} \\
O_{m_{1}\times m_{1}n_{2}}
\end{pmatrix}
(I_{m_{1}}\otimes ((x-2)I_{n_{2}}-L(G_{2})))^{-1}
\begin{pmatrix}
R(G_{1})^{T}\otimes \mathbf{1}_{n_{2}} & O_{m_{1}n_{2}\times m_{1}}
\end{pmatrix}\\\\
&=&\left(\begin{smallmatrix}
(x-r_{1}(1+n_{2}))I_{n_{1}}-L(G_{1})-RR^{T}\Gamma_{L({G_{2}})}(x-2) & R(G_{1})\\\\
R(G_{1})^{T} & (x-2r_{1})I_{m_{1}}+A(\mathcal{L}({G_{1}}))
\end{smallmatrix}\right)\\\\
&=&\left(\begin{smallmatrix}
(x-r_{1}(2+n_{2}))I_{n_{1}}+A(G_{1})-RR^{T}\Gamma_{L({G_{2}})}(x-2) & R(G_{1})\\\\
R(G_{1})^{T} & (x-2r_{1}-2)I_{m_{1}}+R(G_{1})^{T}R(G_{1})
\end{smallmatrix}\right)\\\\
&=&\left(\begin{smallmatrix}
(x-r_{1}(3+n_{2}))I_{n_{1}}+(1-\Gamma_{L({G_{2}})}(x-2))RR^{T} & R(G_{1})\\\\
R(G_{1})^{T} & (x-2r_{1}-2)I_{m_{1}}+R(G_{1})^{T}R(G_{1})
\end{smallmatrix}\right)\\\\
&=&\left(\begin{smallmatrix}
(x-r_{1}(3+n_{2}))I_{n_{1}}+(1-\Gamma_{L({G_{2}})}(x-2))RR^{T} & R(G_{1})\\\\
(1-x+r_{1}(3+n_{2}))R(G_{1})^{T}-(1-\Gamma_{L({G_{2}})}(x-2))R^{T}RR^{T} & (x-2r_{1}-2)I_{m_{1}}
\end{smallmatrix}\right)\\\\
&=&\left(\begin{smallmatrix}
(x-r_{1}(3+n_{2}))I_{n_{1}}+(1-\Gamma_{L({G_{2}})}(x-2))RR^{T}-\frac{1-x+r_{1}(3+n_{2})}{x-2r_{1}-2}RR^{T}+\frac{1-\Gamma_{L({G_{2}})}(x-2)}{(x-2r_{1}-2)} RR^{T}RR^{T}& O\\\\
(1-x+r_{1}(3+n_{2}))R(G_{1})^{T}-(1-\Gamma_{L({G_{2}})}(x-2))R^{T}RR^{T} & (x-2r_{1}-2)I_{m_{1}}
\end{smallmatrix}\right)
\end{eqnarray*}
\begin{eqnarray*}
\begin{array}{lcl}
\det(S)=(x-2r_{1}-2)^{m_{1}} \det(((x-r_{1}(3+n_{2}))I_{n_{1}}-\frac{x-2r_{1}-2-(x-2r_{1}-2)\Gamma_{L({G_{2}})}(x-2)-1+x-r_{1}(3+n_{2})}{x-2r_{1}-2}(A(G_{1})+r_{1}I_{n_{1}})\\~~~~~~~~~~~~~~~+\frac{1-\Gamma_{L({G_{2}})}(x-2)}{x-2r_{1}-2}(A(G_{1})+r_{1}I_{n_{1}})^{2})\\\\
~~~~~~~~=(x-2r_{1}-2)^{m_{1}} \det(((x-r_{1}(3+n_{2}))I_{n_{1}}-\frac{2x-5r_{1}-3-(x-2r_{1}-2)\Gamma_{L({G_{2}})}(x-2)-r_{1}n_{2}}{x-2r_{1}-2}(A(G_{1})+r_{1}I_{n_{1}})\\~~~~~~~~~~~~~~~+\frac{1-\Gamma_{L({G_{2}})}(x-2)}{x-2r_{1}-2}(A(G_{1})+r_{1}I_{n_{1}})^{2})\\\\
~~~~~~~~=(x-2r_{1}-2)^{m_{1}} \det(((x-r_{1}(3+n_{2}))I_{n_{1}}-(\frac{2x-5r_{1}-3-(x-2r_{1}-2)\Gamma_{L({G_{2}})}(x-2)-r_{1}n_{2}}{x-2r_{1}-2}-\frac{2r_{1}(1-\Gamma_{L({G_{2}})}(x-2))}{x-2r_{1}-2})A(G_{1})\\~~~~~~~~~~~~~~~+\frac{r_{1}(2x-5r_{1}-3-(x-2r_{1}-2)\Gamma_{L({G_{2}})}(x-2)-r_{1}n_{2})+r_{1}^{2}-r_{1}^{2}\Gamma_{L({G_{2}})}(x-2)}{x-2r_{1}-2}I_{n_{1}}+\frac{1-\Gamma_{L({G_{2}})}(x-2)}{x-2r_{1}-2}A(G_{1})^{2})\\\\
~~~~~~~~=(x-2-2r_{1})^{m_{1}-n_{1}}\prod \limits_{i=1}^{n_{1}}\{x^{2}-(3r_{1}+r_{1}n_{2}+2r_{1}+2+2r_{1}-r_{1}\Gamma_{L({G_{2}})}(x-2)+(2-\Gamma_{L({G_{2}})}(x-2))\lambda_{i}(G_{1}))x\\~~~~~~~~~~~+(1-\Gamma_{L({G_{2}})}(x-2))(\lambda_{i}(G_{1}))^{2}+r_{1}(3+n_{2})(2r_{1}+2)+4r_{1}^{2}+3r_{1}+r_{1}^{2}n_{2}-r_{1}(r_{1}+2)\Gamma_{L({G_{2}})}(x-2)\\~~~~~~~~~~~~~~+(5r_{1}+3+r_{1}n_{2}-(2r_{1}+2)\Gamma_{L({G_{2}})}(x-2)+2r_{1}(1-\Gamma_{L({G_{2}})}(x-2)))\lambda_{i}(G_{1})\}\\\\
~~~~~~~~=(x-2-2r_{1})^{m_{1}-n_{1}}\prod \limits_{i=1}^{n_{1}}\{(x^{2}-(r_{1}(7+n_{2})+2-r_{1}\Gamma_{L({G_{2}})}(x-2)+(2-\Gamma_{L({G_{2}})}(x-2))(r_{1}-\mu_{i}(G_{1})))x\\~~~~~~~~~~~+(1+\Gamma_{L({G_{2}})}(x-2))(r_{1}-\mu_{i}(G_{1}))^{2}+(r_{1}(7+n_{2})+3-(4r_{1}+2)\Gamma_{L({G_{2}})}(x-2))(r_{1}-\mu_{i}(G_{1}))\\~~~~~~~~~~~~+r_{1}(3+n_{2})(2r_{1}+2)+4r_{1}^{2}+3r_{1}+r_{1}^{2}n_{2}-r_{1}(r_{1}+2)\Gamma_{L({G_{2}})}(x-2))\\\\
\end{array}
\end{eqnarray*}
Therefore
\begin{eqnarray*}
\begin{array}{lcl}
f_{L(G_{1} \boxminus_{T} G_{2})}(x)=(x-2-2r_{1})^{m_{1}-n_{1}}\prod \limits_{j=2}^{n_{2}}\{(x-2-\mu_{i}(G_{2}))^{m_{1}}\}\\\prod \limits_{i=1}^{n_{1}}\{(x^{2}-(r_{1}(7+n_{2})+2-r_{1}\Gamma_{L({G_{2}})}(x-2)+(2-\Gamma_{L({G_{2}})}(x-2))(r_{1}-\mu_{i}(G_{1})))x\\~~~~~~~~+(1+\Gamma_{L({G_{2}})}(x-2))(r_{1}-\mu_{i}(G_{1}))^{2}+(r_{1}(7+n_{2})+3-(4r_{1}+2)\Gamma_{L({G_{2}})}(x-2))(r_{1}-\mu_{i}(G_{1}))\\~~~~~~~~~+r_{1}(3+n_{2})(2r_{1}+2)+4r_{1}^{2}+3r_{1}+r_{1}^{2}n_{2}-r_{1}(r_{1}+2)\Gamma_{L({G_{2}})}(x-2)\}
\end{array}
\end{eqnarray*}
\end{proof}
\begin{corollary}
For $i=1,2$, let $G_{i}$ be an $r_{i}$-regular graph with $n_{i}$ vertices and $m_{i}$ edges. Then the Laplacian spectrum of $G_{1}\boxminus_{T} G_{2}$ consists of:
\begin{enumerate}[(i)]
\item The eigenvalue $2+\mu_{j}(G_{2})$ with multiplicity $m_{1}$ for every eigenvalue $\mu_{j}$ $(j=2,3,\ldots,n_{2})$ of $L(G_{2})$,
\item The eigenvalue $2+2r_{1}$ with multiplicity $m_{1}-n_{1}$,
\item The eigenvalue $2$ with multiplicity $m_{1}-n_{1}$,
\item Three roots of the equation\\
$x^{3}-(r_{1}(7+n_{2})+4+2(r_{1}-\mu_{i}(G_{1})))x^{2}+(2r_{1}(7+n_{2})+4+r_{1}n_{2}+(7r_{1}+r_{1}n_{2}+7+n_{2})(r_{1}-\mu_{i}(G_{1}))+(r_{1}-\mu_{i}(G_{1}))^{2}+r_{1}(3+n_{2})(2r_{1}+2)+4r_{1}^{2}+3r_{1}+r_{1}^{2}n_{2})x-(2+n_{2})(r_{1}-\mu_{i}(G_{1}))^{2}-(2r_{1}(7+n_{2})+6+4r_{1}n_{2}+2n_{2})(r_{1}-\mu_{i}(G_{1}))-2r_{1}(3+n_{2})(2r_{1}+2)-2(4r_{1}^{2}+3r_{1}+r_{1}^{2}n_{2})-r_{1}(r_{1}+2)n_{2}=0$,\\
for each eigenvalue $\mu_{i}$ $(i=1,2,\ldots,n_{1})$ of $L(G_{1})$.
\end{enumerate}
\end{corollary}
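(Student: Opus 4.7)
The plan is to specialize Theorem \ref{t4} to the case where $G_2$ is $r_2$-regular. Since every Laplacian matrix has zero row sums, applying Lemma \ref{le2} to $L(G_2)$ gives $\Gamma_{L(G_2)}(y)=n_2/y$, and consequently $\Gamma_{L(G_2)}(x-2) = n_2/(x-2)$. This substitution converts the mixed rational expression of Theorem \ref{t4} into a rational function of $x$ alone, which must collapse to a polynomial (namely the characteristic polynomial of $L(G_1\boxminus_T G_2)$).

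Two pieces of the corollary are immediate. The factor $(x-2-2r_1)^{m_1-n_1}$ in the theorem gives the eigenvalue $2+2r_1$ of multiplicity $m_1-n_1$ in item (ii), and the factor $\prod_{j=2}^{n_2}(x-2-\mu_j(G_2))^{m_1}$ gives item (i) directly, since the $\mu_j(G_2)$ are Laplacian eigenvalues of $G_2$.

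The remaining work lies in the product $\prod_{i=1}^{n_1}\{\cdots\}$ of quadratics from Theorem \ref{t4}. After substituting $\Gamma_{L(G_2)}(x-2)=n_2/(x-2)$, each quadratic has a simple pole at $x=2$, so multiplying by $(x-2)$ converts it into a genuine cubic polynomial in $x$ with leading term $x^3$. Because Theorem \ref{t4} indexes the $G_2$-product from $j=2$ (implicitly omitting the $\mu_1(G_2)=0$ factor $(x-2)^{m_1}$), restoring that missing factor and cancelling with the $(x-2)^{n_1}$ introduced by the quadratic-to-cubic conversion leaves a net factor $(x-2)^{m_1-n_1}$. This produces item (iii), the eigenvalue $2$ with multiplicity $m_1-n_1$, while the $n_1$ cubics give the $3n_1$ roots of item (iv). A final sanity check
\[
(m_1-n_1)+(m_1-n_1)+(n_2-1)m_1+3n_1 \;=\; n_1+m_1(1+n_2)
\]
confirms that all eigenvalues are accounted for.

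The main obstacle is the polynomial expansion to verify that the resulting cubic matches item (iv) exactly. Starting from the linear coefficient of the theorem's quadratic, written in the form $r_1(7+n_2)+2+2(r_1-\mu_i(G_1)) - (2r_1-\mu_i(G_1))\Gamma$, multiplying through by $(x-2)$ with $\Gamma = n_2/(x-2)$ and collecting powers of $x$ gives $x^2$-coefficient $-(r_1(7+n_2)+4+2(r_1-\mu_i(G_1)))$, matching the stated cubic. The constant coefficient of the quadratic produces, after the same process, the degree-zero and degree-one contributions to the cubic; these are verified by a direct though tedious expansion, using $\mu_i(G_1) = r_1-\lambda_i(G_1)$ only as a bookkeeping device. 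No new ideas are needed beyond the substitution and the shifting of one factor of $(x-2)^{m_1}$ from the $G_2$-product into the $G_1$-product.
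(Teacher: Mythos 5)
Your proposal is correct and is exactly the derivation the paper leaves implicit for this corollary: specialize Theorem \ref{t4} via $\Gamma_{L(G_{2})}(x-2)=n_{2}/(x-2)$ from Lemma \ref{le2} (valid, as you note, because $L(G_{2})$ always has zero row sums), restore the omitted $j=1$ factor $(x-2)^{m_{1}}$, and clear the $(x-2)^{n_{1}}$ denominator to convert each quadratic into a cubic, with the eigenvalue count confirming completeness. One caveat worth recording: to obtain the stated constant term $-(2+n_{2})(r_{1}-\mu_{i}(G_{1}))^{2}$ your ``tedious expansion'' must use the coefficient $(1-\Gamma_{L(G_{2})}(x-2))$ on $(r_{1}-\mu_{i}(G_{1}))^{2}$ that appears in the penultimate display of the proof of Theorem \ref{t4}, not the $(1+\Gamma_{L(G_{2})}(x-2))$ misprinted in the theorem's statement, which would instead give $(n_{2}-2)(r_{1}-\mu_{i}(G_{1}))^{2}$.
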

\begin{corollary}
\begin{enumerate}[(a)]
\item If $H_{1}$ and $H_{2}$ are $L$-cospectral regular graphs, and $H$ is a regular graph, then $H_{1}\boxminus_{T} H$ and $H_{2}\boxminus_{T} H$; and $H\boxminus_{T} H_{1}$ and $H\boxminus_{T} H_{2}$ are $L$-cospectral.
\item If $F_{1}$ and $F_{2}$; and $H_{1}$ and $H_{2}$ are $L$-cospectral regular graphs, then $F_{1}\boxminus_{T} H_{1}$ and $F_{2}\boxminus_{T} H_{2}$ are $L$-cospectral.
\end{enumerate}
\end{corollary}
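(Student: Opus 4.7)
The plan is a direct application of Theorem~\ref{t4} combined with Lemma~\ref{le2}. The key observation I would make first is that the closed form of $f_{L(G_{1} \boxminus_{T} G_{2})}(x)$ given there depends on $G_{1}$ only through the invariants $n_{1}$, $m_{1}$, $r_{1}$ and the multiset of Laplacian eigenvalues $\{\mu_{i}(G_{1})\}$, and depends on $G_{2}$ only through $n_{2}$, the multiset $\{\mu_{j}(G_{2})\}$, and the coronal $\Gamma_{L(G_{2})}(x-2)$. Nothing else about the structure of the two graphs enters the formula.

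Next I would evaluate the coronal $\Gamma_{L(G_{2})}(x-2)$. Since every row of $L(G_{2})$ sums to $0$, Lemma~\ref{le2} gives $\Gamma_{L(G_{2})}(x)=n_{2}/x$, so this factor depends solely on $n_{2}$. I would then record the standard fact that two $L$-cospectral regular graphs must share all of the invariants listed above: the cardinality of the spectrum gives equal $n$; the trace of $L$ gives equal $2m$; regularity then forces $r=2m/n$ to agree; and the multisets of eigenvalues coincide by hypothesis.

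For part (a), substituting $H_{1}$ or $H_{2}$ into the $G_{1}$-slot of Theorem~\ref{t4} yields the same polynomial because $n_{1},m_{1},r_{1}$ and $\{\mu_{i}(G_{1})\}$ agree, so $H_{1}\boxminus_{T}H$ and $H_{2}\boxminus_{T}H$ are $L$-cospectral. Substituting them into the $G_{2}$-slot instead, the invariant $n_{2}$ and the multiset $\{\mu_{j}(G_{2})\}$ again agree, and the coronal matches by the first paragraph, so $H\boxminus_{T}H_{1}$ and $H\boxminus_{T}H_{2}$ are $L$-cospectral. Part (b) then follows by transitivity through the intermediate graph $F_{2}\boxminus_{T}H_{1}$: the pairs $(F_{1}\boxminus_{T}H_{1},\,F_{2}\boxminus_{T}H_{1})$ and $(F_{2}\boxminus_{T}H_{1},\,F_{2}\boxminus_{T}H_{2})$ are $L$-cospectral by two invocations of part (a).

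There is essentially no obstacle: the only point that requires thought is the opening bookkeeping verification that the expression in Theorem~\ref{t4} depends on each factor only through the stated invariants, and that $\Gamma_{L(G_{2})}$ is a function of $n_{2}$ alone via Lemma~\ref{le2}. Once these two routine observations are in place, the proof collapses into one line per cospectrality claim.
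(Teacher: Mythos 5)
Your proposal is correct and is exactly the argument the paper intends: the corollary is stated as an immediate consequence of Theorem~\ref{t4}, whose formula depends on $G_1$ only through $n_1$, $m_1$, $r_1$ and the Laplacian spectrum, and on $G_2$ only through $n_2$, its Laplacian spectrum, and $\Gamma_{L(G_2)}(x-2)=n_2/(x-2)$, all of which are determined for regular graphs by $L$-cospectrality. The transitivity step for part (b) matches the standard reading of the paper's (omitted) proof.
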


\end{document}